\newtheorem{theorem}{Theorem}
\theoremstyle{plain}
\newtheorem{corollary}{Corollary}
\newtheorem{lemma}{Lemma}
\numberwithin{equation}{section}
\begin{document}
\title[On BAI and Existence of Dense Ideals in Real and Jordan Algebras]{On
Bounded Approximate Identities and Existence of Dense Ideals in Real Locally
C*- and Locally JB-Algebras}
\author{Alexander A. Katz}
\address{Dr. Alexander A. Katz, Department of Mathematics and Computer
Science, St. John's College of Liberal Arts and Sciences, St. John's
University, 300 Howard Avenue, DaSilva Academic Center 314, Staten Island,
NY 10301, USA}
\email{katza@stjohns.edu}
\author{Oleg Friedman}
\address{Oleg Friedman, Department of Mathematical Sciences, University of
South Africa, P.O. Box 392, Pretoria 0003, South Africa}
\email{friedman001@yahoo.com}
\curraddr{Oleg Friedman, Department of Mathematics and Computer Science, St.
John's College of Liberal Arts and Sciences, St. John's University, 8000
Utopia Parkway, St. John's Hall 334, Queens, NY 11439, USA}
\email{fridmano@stjohns.edu}
\thanks{The second author is thankful to the first author and Dr. Louis E.
Labuschagne (University of South Africa, Pretoria, South Africa) for
constant help, support and discussions.}
\date{September 1, 2008}
\subjclass[2000]{Primary 46K05, 46H05, 46H70, 17C50, 46L05; Secondary 46A03,
46K70, 17C65.}
\keywords{Locally C*-algebras, real locally C*-algebras, locally
JB-algebras, projective limit of projective family of algebras, approximate
identities, dense ideals.}

\begin{abstract}
It has been established by Inoue that a complex locally C*-algebra with a
dense ideal posesses a bounded approximate identity which belonges to that
ideal. It has been shown by Fritzsche that if a unital complex locally
C*-algebra has an unbounded element then it also has a dense one-sided
ideal. In the present paper we obtain analogues of the aforementioned
results of Inoue and Fritzsche for real locally C*-algebras (projective
limits of projective families of real C*-algebras), and for locally
JB-algebras (projective limits of projective families of JB-algebras).
\end{abstract}

\maketitle

\section{Introduction}

Banach associative regular *-algebras over $%
\mathbb{C}
$, so called \textit{C*-algebras}, were first introduces in 1940's by
Gelfand and Naimark in the paper \cite{GelfandNaimark43}. Since then these
algebras were studied extensively by various authors, and now, the theory of
C*-algebras is a big part of Functional Analysis with applications in almost
all branches of Modern Mathematics and Theoretical Physics. For the basics
of the theory of C*-algebras, see for example Pedersen's monograph \cite%
{Pedersen79}.

The real analogues of complex C*-algebras, so called \textit{real C*-algebras%
}, which are real Banach *-algebras with regular norms such that their
complexifications are complex\textit{\ C*-algebras, }were studied in
parallel by many authors. For the current state of the basic theory of real
C*-algebras, see Li's monograph \cite{Li03}.

The real Jordan analogues of complex C*-algebras, so called \textit{%
JB-algebras}, were first defined by Alfsen, Schultz and St\o rmer in \cite%
{AlfsenSchultzStoermer78} as the real Banach--Jordan algebras satisfying for
all pairs of elements $x$ and $y$ the inequality of fineness%
\begin{equation*}
\left\Vert x^{2}+y^{2}\right\Vert \geq \left\Vert x\right\Vert ^{2},
\end{equation*}%
and regularity identity 
\begin{equation*}
\left\Vert x^{2}\right\Vert =\left\Vert x\right\Vert ^{2}.
\end{equation*}%
The basic theory of JB-algebras is fully treated in monograph of
Hanche-Olsen and St\o rmer \cite{Hanche-OlsenStoermer84}. If $A$ is a
C*--algebra, or a real C*-algebra, then the self-adjoint part $A_{sa}$ of $A$
is a JB-algebra under the Jordan product 
\begin{equation*}
x\circ y=\frac{(xy+yx)}{2}.
\end{equation*}%
Closed subalgebras of $A_{sa}$, for some C*-algebra or real C*-algebra $A$,
become relevant examples of JB-algebras, and are called \textit{JC-algebras}.

Complete locally multiplicatively-convex algebras or equivalently, due to
Arens-Michael Theorem, projective limits of projective families of Banach
algebras, were first studied by Arens in \cite{Arens52} and Michael in \cite%
{Michael52}. They were since studied by many authors under different names.
In particular, projective limits of projective families of C*-algebras were
studied by Inoue in \cite{Inoue71}, Apostol in \cite{Apostol71}, Schm\"{u}%
dgen in\textbf{\ }\cite{Schmuedgen75}, Phillips in \cite{Phillips88}, Bhatt
and Karia in \cite{BhattKaria93}, Fritsche in \cite{Fritzsche82}, etc. We
will follow Inoue \cite{Inoue71} in the usage of the name \textit{locally
C*-algebras} for these topological algebras. The current state of the basic
theory of locally C*-algebras is treated in the monograph of Fragoulopoulou 
\cite{Fragoulopoulou05}\textbf{.}

In particular, Inoue in \cite{Inoue71} proved that a complex locally
C*-algebra with a dense ideal posesses a bounded approximate identity which
belonges to the aforementioned ideal. On the other hand, Fritzsche showed in 
\cite{Fritzsche82} that if a unital complex locally C*-algebra has an
unbounded element then it also has a dense one-sided ideal.

Katz and Friedman in \cite{KatzFriedman06} introduced topological algebras
which are projective limits of projective families of real C*-algebras under
the name of \textit{real locally C*-algebras}, and projective limits of
projective families of JB-algebras under the name of \textit{locally
JB-algebras}.

The present paper is aimed to the presentation of analogues of the cited
above results of Inoue and Fritzsche for real locally C*-algebras and
locally JB-algebras.

\section{Preliminaries}

Let us first recall basic facts on JB algebras and fix the notation. (for
details see the monograph \cite{Hanche-OlsenStoermer84}). Let $A$ be
JB-algebra endowed with a product $".\circ ."$. We write 
\begin{equation*}
A_{\mathbf{1}}=\{a\in A;\left\Vert a\right\Vert \leq 1\},
\end{equation*}%
\begin{equation*}
A^{+}=\{a^{2};a\in A\},
\end{equation*}%
\begin{equation*}
A_{\mathbf{1}}^{+}=A_{\mathbf{1}}\cap A^{+}.
\end{equation*}%
For $a\in A$, the mapping 
\begin{equation*}
T_{a}:A\rightarrow A,
\end{equation*}%
is defined by putting 
\begin{equation*}
T_{a}b=a\circ b,
\end{equation*}%
and , 
\begin{equation*}
U_{a}:A\rightarrow A,
\end{equation*}%
is defined by putting 
\begin{equation*}
U_{a}b=2a\circ (a\circ b)-a^{2}\circ b.
\end{equation*}%
It is well known that 
\begin{equation*}
U_{a}(A^{+})\subset A^{+}.
\end{equation*}

A closed subspace $I$ of $A$ is called a \textit{Jordan ideal} if 
\begin{equation*}
T_{a}(A)\subset I,
\end{equation*}%
for all $a\in I$. Similarly, a closed subspace $Q$ of $A$ is said to be a 
\textit{quadratic ideal} if 
\begin{equation*}
U_{a}(A)\subset Q,
\end{equation*}%
for all $a\in Q$. Both the Jordan ideal and the quadratic ideal of a Jordan
algebra are subalgebras. We will use the symbol $B[a_{1},...,a_{n}]$ to
denote the JB-subalgebra $B$ of $A$ generated by elements $a_{1},...,a_{n}$.
The elements $a,b\in A$ are said to be \textit{operator commuting} if 
\begin{equation*}
T_{a}T_{b}=T_{b}T_{a}.
\end{equation*}

The JB-algebra $A$ is called \textit{associative or abelian} if it consists
of operator commuting elements. The associative subalgebra $B[a]$ is said to
be \textit{singly generated}.

The following two identities are corollaries from Shirshov-MacDonald theorem
(see \cite{Hanche-OlsenStoermer84} for details):

\begin{equation*}
(U_{x}y)^{2}=U_{x}U_{y}x^{2};
\end{equation*}

\begin{equation*}
U_{U_{x}y}z=U_{x}U_{y}U_{x}z.
\end{equation*}

The second identities is known by the name of \textit{MacDonald Identity}.

Now, let us briefly recall some more basic material from the aforementioned
sources one needs to comprehend what follows.

A Hausdorff topological vector space over the field of $%
\mathbb{R}
$ or $%
\mathbb{C}
$, in which any neighborhood of the zero element contains a convex
neighborhood of the zero element; in other words, a topological vector space
is a \textit{locally convex space} if and only if the topology of is a
Hausdorff locally convex topology.

A number of general properties of locally convex spaces follows immediately
from the corresponding properties of locally convex topologies; in
particular, subspaces and Hausdorff quotient spaces of a locally convex
space, and also products of families of locally convex spaces, are
themselves locally convex spaces. Let $\Lambda $ be an upward directed set
of indices and a family 
\begin{equation*}
\{E_{\alpha },\alpha \in \Lambda \},
\end{equation*}%
of locally convex spaces (over the same field) with topologies 
\begin{equation*}
\{\tau _{\alpha },\alpha \in \Lambda \}.
\end{equation*}%
Suppose that for any pair $(\alpha ,\beta )$, 
\begin{equation*}
\alpha \leq \beta ,
\end{equation*}%
$\alpha ,\beta \in \Lambda $, there is defined a continuous linear mapping 
\begin{equation*}
g_{\alpha }^{\beta }:E_{\beta }\rightarrow E_{\alpha }.
\end{equation*}

A family 
\begin{equation*}
\{E_{\alpha },\alpha \in \Lambda \}
\end{equation*}%
is called \textit{projective}, if for each triplet $(\alpha ,\beta ,\gamma
), $ 
\begin{equation*}
\alpha \leq \beta \leq \gamma ,
\end{equation*}%
$\alpha ,\beta ,\gamma \in \Lambda ,$ 
\begin{equation*}
g_{\alpha }^{\gamma }=g_{\beta }^{\gamma }\circ g_{\alpha }^{\beta },
\end{equation*}%
and for each $\alpha \in \Lambda ,$ 
\begin{equation*}
g_{\alpha }^{\alpha }=Id.
\end{equation*}

Let $E$\ be the subspace of the product 
\begin{equation*}
\dprod\limits_{\alpha \in \Lambda }E_{\alpha },
\end{equation*}%
whose elements 
\begin{equation*}
x=(x_{\alpha }),
\end{equation*}%
satisfy the relations 
\begin{equation*}
x_{\alpha }=g_{\alpha }^{\beta }(x_{\beta }),
\end{equation*}%
for all $\alpha \leq \beta $. The space $E$\ is called the \textit{%
projective limit} of the projective family $E_{\alpha },$ $\alpha \in
\Lambda ,$\ with respect to the family $(g_{\alpha }^{\beta }),$ $\alpha
,\beta \in \Lambda $\ and is denoted by 
\begin{equation*}
\lim g_{\alpha }^{\beta }E_{\beta },
\end{equation*}%
or 
\begin{equation*}
\underset{\longleftarrow }{\lim }E_{\alpha }.
\end{equation*}%
The topology of $E$\ is the \textit{projective topology} with respect to the
family 
\begin{equation*}
(E_{\alpha },g_{\alpha }^{\beta },\pi _{\alpha }),
\end{equation*}%
$\alpha \in \Lambda $, where $\pi _{\alpha },$ $\alpha \in \Lambda ,$\ is
the restriction to the subspace $E$\ of the projection 
\begin{equation*}
\widehat{\pi }_{\alpha }:\dprod\limits_{\beta \in \Lambda }E_{\beta
}\rightarrow E_{\alpha },
\end{equation*}%
and 
\begin{equation*}
\pi _{\beta }=g_{\alpha }^{\beta }\circ \pi _{\alpha },
\end{equation*}%
$\forall \alpha ,\beta \in \Lambda ,$

When you take instead of $E_{\alpha },$ $\alpha \in \Lambda ,$ a projective
family of algebras, *-algebras, Jordan algebras, etc., you naturally get a
correspondent algebra, *-algebra or Jordan algebra structure in the
projective limit algebra 
\begin{equation*}
E=\underset{\longleftarrow }{\lim }E_{\alpha }.
\end{equation*}

Let $E$ be a vector space. A real function $p:E\rightarrow 
\mathbb{R}
$ on $E$ is called a \textit{seminorm}, if:

\textit{1).} $p(x)\geq 0,$ $\forall x\in E;$

\textit{2).} $p(\lambda x)=\left\vert \lambda \right\vert p(x),$ $\forall
\lambda \in 
\mathbb{R}
$ or $%
\mathbb{C}
$, and $x\in E;$

\textit{3).} $p(x+y)\leq p(x)+p(y),$ $\forall x,y\in E.$

One can see that 
\begin{equation*}
p(\mathbf{0})=0.
\end{equation*}%
If 
\begin{equation*}
p(x)=0,
\end{equation*}%
implies 
\begin{equation*}
x=\mathbf{0},
\end{equation*}%
seminorm is called a \textit{norm and is usually denoted by }$\left\Vert
.\right\Vert $. If a space with a norm is complete, it is called a \textit{%
Banach space}.

Let $(E,p)$ be a seminormed space, and 
\begin{equation*}
N_{p}=\ker (p)=p^{-1}\{0\}.
\end{equation*}%
The quotient space $E/N_{p}$ is a linear space and the function 
\begin{equation*}
\left\Vert .\right\Vert _{p}:E/N_{p}\rightarrow 
\mathbb{R}
_{+}:
\end{equation*}%
\begin{equation*}
x_{p}=x+N_{p}\rightarrow \left\Vert x_{p}\right\Vert _{p}=p(x),
\end{equation*}%
is a well defined norm on $E/N_{p}$ induced by the seminorm $p.$ The
corresponding quotient normed space will be denoted by $E/N_{p},$ and the
Banach space completion of $E/N_{p}$ by $E_{p}.$ One can easily see that $%
E_{p}$ is the Hausdorff completion of the seminormed space $(E,p).$

The algebras considered below will be without the loss of generality unital.
If the algebra does not have an identity, it can be adjoint by the usual
unitialization procedure.

A Jordan algebra is an algebra $E$ in which the identities 
\begin{equation*}
x\circ y=y\circ x,
\end{equation*}%
\begin{equation*}
x^{2}\circ (y\circ x)=(x^{2}\circ y)\circ x,
\end{equation*}%
hold.

If $E$ is an algebra, the seminorm $p$ on $E$ compatible with the
multiplication of $E$, in the sense that 
\begin{equation*}
p(xy)\leq p(x)p(y),
\end{equation*}%
$\forall x,y\in E,$ is called \textit{submultiplicative }or \textit{%
m-seminorm}.

For submultiplicative seminorm on a Jordan algebra $E$, the following
inequality holds: 
\begin{equation*}
p(x\circ y)\leq p(x)p(y),
\end{equation*}%
$\forall x,y\in E.$ A seminorm on a Jordan algebra $E$\ is called \textit{%
fine}, if the following inequality holds: 
\begin{equation*}
p(x^{2}+y^{2})\geq p(x^{2}),
\end{equation*}%
$\forall x,y\in E.$

A \textit{Banach-Jordan algebra} is Jordan algebra which is as well a Banach
algebra.

Let $E$ be an algebra. A subset $U$ of $E$ is called \textit{multiplicative}
or \textit{idempotent}, if 
\begin{equation*}
UU\subseteq U,
\end{equation*}%
in the sense that $\forall x,y\in U,$ the product 
\begin{equation*}
xy\in U.
\end{equation*}

If $p$ is an m-seminorm on $E$ the unit semiball $U_{p}(1)$ corresponding to 
$p$, that is 
\begin{equation*}
U_{p}(1)=\{x\in E:p(x)\leq 1\},
\end{equation*}%
and one can see that this set is multiplicative. Moreover, $U_{p}(1)$ is an
absolutely-convex (balanced and convex).absorbing subset of $E.$ It is known
that given an absorbing absolutely-convex subset 
\begin{equation*}
U\subset E,
\end{equation*}%
the function%
\begin{equation*}
p_{U}:E\rightarrow 
\mathbb{R}
_{+}:
\end{equation*}%
\begin{equation*}
x\rightarrow p_{U}(x)=\inf \{\lambda >0:x\in \lambda U\},
\end{equation*}%
called \textit{gauge} or \textit{Minkowski functional} of $U,$ is a
seminorm. One can see that a real-valued function $p$ on the algebra $E$ is
an m-seminorm iff 
\begin{equation*}
p=p_{U},
\end{equation*}%
for some absorbing, absolutely-convex and multiplicative subset 
\begin{equation*}
U\subset E.
\end{equation*}%
In fact, one can take 
\begin{equation*}
U=U_{p}(1).
\end{equation*}

By \textit{topological algebra} we mean a topological vector space which is
also an algebra, such that the ring multiplication is separately continuous.
A topological algebra $E$ is often denoted by $(E,\tau ),$ where $\tau $ is
the topology of the underlying topological vector space of $E.$ The topology 
$\tau $ is determined by a \textit{fundamental }$0$\textit{-neighborhood
system}, say $\mathcal{B}$, consisting of absorbing, balanced sets with the
property 
\begin{equation*}
\forall \text{ }V\in \mathcal{B}\text{ }\exists \text{ }U\in \mathcal{B},
\end{equation*}%
satisfying the condition $U+U\subseteq V.$ Since translations by $y$ in $%
(E,\tau )$, i.e. the maps 
\begin{equation*}
x\rightarrow x+y:
\end{equation*}%
\begin{equation*}
(E,\tau )\rightarrow (E,\tau ),
\end{equation*}%
$y\in E,$ are homomorphisms, an $x$-neighborhood in $(E,\tau )$ is of the
form 
\begin{equation*}
x+V,
\end{equation*}%
with $V\in \mathcal{B}$. A closed, absorbing and absolutely-convex subset of
a topological algebra $(E,\tau )$ is called \textit{barrel}. An \textit{%
m-barrel} is a multiplicative barrel of $(E,\tau ).$

A \textit{locally convex algebra} is a topological algebra in which the
underlying topological vector space is a locally convex space. The topology $%
\tau $\ of a locally convex algebra $(E,\tau )$ is defined by a fundamental $%
0$-neighborhood system consisting of closed absolutely-convex sets.
Equivalently, the same topology $\tau $\ is determined by a family of
nonzero seminorms. Such a family, say 
\begin{equation*}
\Gamma =\{p_{\alpha }\},
\end{equation*}%
$\alpha \in \Lambda ,$ or, for distinction purposes 
\begin{equation*}
\Gamma _{E}=\{p_{\alpha }\},
\end{equation*}%
$\alpha \in \Lambda ,$ is always assumed without a loss of generality 
\textit{saturated}. That is, for any finite subset 
\begin{equation*}
F\subset \Gamma ,
\end{equation*}%
the seminorm 
\begin{equation*}
p_{F}(x)=\underset{p\in F}{\max }p(x),
\end{equation*}%
$x\in E,$ again belongs to $\Gamma .$ Saying that 
\begin{equation*}
\Gamma =\{p_{\alpha }\},
\end{equation*}%
$\alpha \in \Lambda ,$ is a \textit{defining family of seminorms} for a
locally convex algebra $(E,\tau ),$ we mean that $\Gamma $ is a saturated
family of seminorms defining the topology $\tau $ on $E.$ That is 
\begin{equation*}
\tau =\tau _{\Gamma },
\end{equation*}%
with $\tau _{\Gamma }$ completely determined by a fundamental $\mathbf{0}$%
-neighborhood system given by the $\varepsilon $-semiballs 
\begin{equation*}
U_{p}(\varepsilon )=\varepsilon U_{p}(\varepsilon )=\{x\in E:p(x)\leq
\varepsilon \},
\end{equation*}%
$\varepsilon >0,$ $p\in \Gamma $. More precisely, for each $\mathbf{0}$%
-neighborhood 
\begin{equation*}
V\subset (E,\tau ),
\end{equation*}%
there is an $\varepsilon $-semiball $U_{p}(\varepsilon )$, $\varepsilon >0,$ 
$p\in \Gamma ,$ such that 
\begin{equation*}
U_{p}(\varepsilon )\subseteq V.
\end{equation*}%
The neighborhoods $U_{p}(\varepsilon )$, $\varepsilon >0,$ $p\in \Gamma ,$
are called \textit{basic }$\mathbf{0}$\textit{-neighborhoods}.

A locally C*-algebra (real locally C*-algebra, resp. locally JB-algebra) is
a projective limit of projective family of C*-algebras (real C*-algebras,
resp. JB-algebras). This is equivalent for locally C*- and real locally
C*-algebras to the requirement that the family of defining continuous
seminorms be regular: 
\begin{equation*}
p(x^{\ast }x)=p(x)^{2},
\end{equation*}%
as well as for the real locally C*-algebra $R$: 
\begin{equation*}
R\cap iR=\{\mathbf{0}\}.
\end{equation*}%
In the case of locally JB-algebras this is equivalent to the requirement
that the family of defining continuous seminorms be fine and regular:

\begin{equation*}
p(x^{2}+y^{2})\geq p(x^{2}),
\end{equation*}
and

\begin{equation*}
p(x^{2})=p(x)^{2},
\end{equation*}%
$\forall p\in \Gamma ,$ $x,y\in E.$

For a locally C*-algebra (real locally C*-algebra, resp. locally JB-algebra) 
$E$, by the bounded part we mean the subalgebra 
\begin{equation*}
E_{b}=\{x\in E:\left\Vert x\right\Vert _{\infty }=\underset{p\in \Gamma (E)}{%
\sup }p(x)<\infty \}.
\end{equation*}

\section{BAI in real locally C*-algebras and locally JB-algebras with dense
ideals}

Let $(A,\tau )$ be a real or complex associative topological algebra and $%
(a_{\lambda }),$ $\lambda \in \Lambda ,$ be a net in $(A,\tau )$ such that 
\begin{equation*}
\underset{\lambda }{\lim }\text{ }xa_{\lambda }=x=\underset{\lambda }{\lim }%
\text{ }a_{\lambda }x,
\end{equation*}%
for any $x\in A.$

Such a net is called \textit{approximate identity} (abbreviated to ai) of $%
(A,\tau ).$ If only the left or the right equality above is valid, then we
speak of a \textit{left} (resp. \textit{right}) \textit{ai}.\ In the case an 
\textit{ai}, when $(a_{\lambda }),$ $\lambda \in \Lambda ,$ of $(A,\tau )$
is a bounded subset of $(A,\tau ),$ we speak about a \textit{bounded
approximate identity} (abbreviated to \textit{bai}) of $(A,\tau ).$ In the
case an \textit{left} \textit{ai }(resp.\textit{\ right ai}), when $%
(a_{\lambda }),$ $\lambda \in \Lambda ,$ of $(A,\tau )$ is a bounded subset
of $(A,\tau ),$ we speak about a \textit{bounded left approximate identity} (%
\textit{bounded right approximate identity}), abbreviated to \textit{blai }%
(resp. \textit{brai}) of $(A,\tau ).$

Let now $(J,\tau )$ be a real Jordan topological algebra and $(a_{\lambda
}), $ $\lambda \in \Lambda ,$ be a net in $(J,\tau )$ such that 
\begin{equation*}
\underset{\lambda }{\lim }\text{ }a_{\lambda }\circ x=x,
\end{equation*}%
for any $x\in J.$

Such a net is called \textit{approximate identity} (abbreviated to ai) of $%
(J,\tau ).$In the case an \textit{ai}, when\textit{\ }$(a_{\lambda }),$ $%
\lambda \in \Lambda ,$ of $(J,\tau )$ is a bounded subset of $(J,\tau ),$ we
speak about a \textit{bounded approximate identity} (abbreviated to \textit{%
bai}) of $(J,\tau ).$

Let now $(a_{\lambda }),$ $\lambda \in \Lambda ,$ be a net in $(J,\tau )$
such that%
\begin{equation*}
\underset{\lambda }{\lim }\text{ }U_{a_{\lambda }}x=x,
\end{equation*}%
for any $x\in J,$ then we speak of a \textit{quadratic ai }(abbreviated to 
\textit{qai}). In the case a \textit{qai}, $(a_{\lambda }),$ $\lambda \in
\Lambda ,$ of $(J,\tau )$ is a bounded subset of $(J,\tau ),$ we speak about
a \textit{bounded quadratic approximate identity }(abbreviated to \textit{%
bqai) }of $(J,\tau )$.

Taking a completion of an associative real or complex topological algebra $%
(A,\tau ),$ or a completion of a real Jordan topological algebra $(J,\tau )$
(that is taking the completion of the underlying topological vector space of 
$(A,\tau )$ (resp. $(J,\tau )$)), one may fail to get a topological algebra,
unless the multiplication in $(A,\tau )$ (resp. $(J,\tau )$) is jointly
continuous (see, for example for \cite{Mallios86}\ details). If 
\begin{equation*}
\tau =\tau _{\Gamma },
\end{equation*}%
the respective completion of $(A,\tau )$ (resp. $(J,\tau )$), when it
exists, will be denoted by $(\widetilde{A},\tau _{\widetilde{\Gamma }})$
(resp. $(\widetilde{J},\tau _{\widetilde{\Gamma }})$), where $\widetilde{%
\Gamma }$ consists of the (unique) extentions of the elements of $\Gamma $
to the corresponding completion of $(A,\tau )$ (resp. $(J,\tau )$).

The following three lemmas present some properties of approximate
identities. The first one talks about \textit{bai}'s for completion algebras
and reveals the fact that the squares of the elements of \textit{bai} make
up a \textit{bai} as well.

\begin{lemma}
Let $(a_{\lambda }),$ $\lambda \in \Lambda ,$ be a bai of a real or complex
associative topological algebra $(A,\tau ),$ or Jordan topological algebra $%
(J,\tau ),$ with continuous multiplication. Then:

i). $(a_{\lambda }),$ $\lambda \in \Lambda ,$ is also bai for the completion 
$(\widetilde{A},\widetilde{\tau })$\ of $(A,\tau )$ (resp. $(\widetilde{J},%
\widetilde{\tau })$ of $(J,\tau )$);

ii). $(a_{\lambda }^{2}),$ $\lambda \in \Lambda ,$ is a bai of $(A,\tau )$
(resp. $(J,\tau )$).
\end{lemma}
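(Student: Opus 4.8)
The plan is to prove both parts by exploiting the defining property of an approximate identity together with the joint continuity of multiplication and the fact that a \textit{bai} is bounded.

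For part (i), I would argue by a density-and-continuity argument. Let $(a_{\lambda})$ be a \textit{bai} of $(A,\tau)$ and let $\widetilde{x}\in\widetilde{A}$ be arbitrary. Since $A$ is dense in its completion $\widetilde{A}$, I would fix a basic $\mathbf{0}$-neighborhood $U_{\widetilde{p}}(\varepsilon)$ and choose $x\in A$ with $\widetilde{p}(\widetilde{x}-x)$ small. The key estimate is to split
\begin{equation*}
\widetilde{x}-a_{\lambda}\widetilde{x}=(\widetilde{x}-x)+(x-a_{\lambda}x)+a_{\lambda}(x-\widetilde{x}),
\end{equation*}
and then bound each of the three terms in the seminorm $\widetilde{p}$. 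The first term is controlled by the choice of $x$; the middle term tends to $\mathbf{0}$ because $(a_{\lambda})$ is a \textit{bai} for $A$; and the third term is where boundedness of $(a_{\lambda})$ enters, since $\widetilde{p}(a_{\lambda}(x-\widetilde{x}))\le M\cdot\widetilde{p}(x-\widetilde{x})$ for the uniform bound $M$ on the net. The Jordan case is identical with the associative product replaced by $\circ$, using $p(a\circ b)\le p(a)p(b)$ stated in the preliminaries. I expect the main subtlety here to be the honest verification that joint continuity of multiplication lets the bound $M$ and the seminorms pass to the completion, i.e. that each $\widetilde{p}$ is still submultiplicative and that the net remains bounded in $\widetilde{A}$.

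For part (ii), I would show that $(a_{\lambda}^{2})$ is again a \textit{bai}. Boundedness is immediate from submultiplicativity, since $p(a_{\lambda}^{2})\le p(a_{\lambda})^{2}\le M^{2}$ for every $p\in\Gamma$, so the squared net is bounded by $M^{2}$. For the approximate-identity property I would write, for fixed $x$,
\begin{equation*}
x-a_{\lambda}^{2}x=(x-a_{\lambda}x)+a_{\lambda}(x-a_{\lambda}x),
\end{equation*}
and observe that both summands go to $\mathbf{0}$: the first directly, and the second because $(x-a_{\lambda}x)\to\mathbf{0}$ while $(a_{\lambda})$ is bounded, so $p(a_{\lambda}(x-a_{\lambda}x))\le M\,p(x-a_{\lambda}x)\to 0$. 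The right-sided identity is handled symmetrically. In the Jordan setting the analogous manipulation must be carried out with $\circ$ in place of the associative product, and here I anticipate the genuine obstacle: the identity $x-a_{\lambda}^{2}\circ x=(x-a_{\lambda}\circ x)+\cdots$ does not telescope as cleanly because $\circ$ is nonassociative, so one cannot simply iterate $T_{a_{\lambda}}$. I would instead use that $a_{\lambda}^{2}\circ x=T_{a_{\lambda}^{2}}x$ together with the linearization identities available from the Shirshov--MacDonald theorem to relate $T_{a_{\lambda}^{2}}$ to compositions of $T_{a_{\lambda}}$, and then estimate in the defining seminorms.

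The hard part will be the nonassociative bookkeeping in part (ii) for Jordan algebras: controlling $a_{\lambda}^{2}\circ x$ in terms of $a_{\lambda}\circ x$ requires the Jordan identities rather than a naive factorization, and one must be careful that all the intermediate products stay uniformly bounded so that the limit can be taken seminorm-by-seminorm across the whole saturated family $\Gamma$.
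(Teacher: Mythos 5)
Your argument for part (i) and for the associative case of part (ii) is correct and is the standard one: the three-term splitting $\widetilde{x}-a_{\lambda}\widetilde{x}=(\widetilde{x}-x)+(x-a_{\lambda}x)+a_{\lambda}(x-\widetilde{x})$ and the factorization $x-a_{\lambda}^{2}x=(x-a_{\lambda}x)+a_{\lambda}(x-a_{\lambda}x)$ are exactly the computations one finds in \cite{Mallios86}, which is all the paper itself offers (its ``proof'' is a one-line deferral to that reference). One caveat: the lemma is stated for an arbitrary topological algebra with continuous multiplication, so the estimate $p(a_{\lambda}y)\leq M\,p(y)$ you invoke is not literally available; you should either restrict to the m-convex setting in which the paper actually works, or replace submultiplicativity by the equicontinuity statement that joint continuity plus boundedness of the net provide, namely that for every $\mathbf{0}$-neighborhood $V$ there is a $\mathbf{0}$-neighborhood $W$ with $a_{\lambda}W\subseteq V$ for all $\lambda$. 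That is a routine repair.

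The genuine gap is the Jordan case of part (ii); you have correctly located the obstacle but your proposed route does not close it. The linearized identity you would use is $T_{a^{2}}=2T_{a}^{2}-U_{a}$, i.e. $a^{2}\circ x=2a\circ (a\circ x)-U_{a}x$. Your method does give $a_{\lambda}\circ (a_{\lambda}\circ x)\rightarrow x$ (write $a\circ (a\circ x)-x=a\circ (a\circ x-x)+(a\circ x-x)$ and use boundedness), so the identity shows that $a_{\lambda}^{2}\circ x\rightarrow x$ holds if and only if $U_{a_{\lambda}}x\rightarrow x$. In other words, expressing $T_{a^{2}}$ through compositions of $T_{a}$ converts the claim into the assertion that every bounded approximate identity is automatically a quadratic approximate identity, which is exactly the nonassociative difficulty you were trying to bypass and which does not follow from $T_{a_{\lambda}}x\rightarrow x$ plus boundedness by any telescoping. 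To finish one needs additional input --- for instance positivity and monotonicity of the net as in the paper's Theorem 3, or JB-specific norm estimates for $U_{a}$ --- none of which is assumed in the lemma as stated. The paper gives you no help here: its citation of \cite{Mallios86} covers only associative topological algebras, so the Jordan half of (ii) is not actually proved there either; your honest flagging of the issue is, if anything, more informative than the source.
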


\begin{proof}
Immediately follows from considerations in \cite{Mallios86}.
\end{proof}

The second one talks about \textit{bai} made up out of adjoint elements of
another \textit{bai} in a topological *-algebra.

\begin{lemma}
Let $(A,\tau )$ be a topological *-algebra with an ai $(a_{\lambda }),$ $%
\lambda \in \Lambda .$ Then $(a_{\lambda }^{\ast }),$ $\lambda \in \Lambda ,$
is an ai of $(A,\tau ).$ Moreover, $(a_{\lambda }^{\ast }),$ $\lambda \in
\Lambda ,$ is a bai whenever $(a_{\lambda }),$ $\lambda \in \Lambda ,$ is a
bai.
\end{lemma}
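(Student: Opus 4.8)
The plan is to exploit the compatibility between the involution and the approximate identity property, translating the two limit relations defining an \textit{ai} into their adjoint forms. First I would recall that in a topological *-algebra the involution $x \mapsto x^{\ast}$ is a continuous, conjugate-linear, anti-multiplicative involution, so in particular it is continuous with respect to $\tau$ and reverses products: $(xy)^{\ast} = y^{\ast} x^{\ast}$. The defining property of the \textit{ai} $(a_{\lambda})$ is that $\lim_{\lambda} x a_{\lambda} = x = \lim_{\lambda} a_{\lambda} x$ for every $x \in A$.

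The key step is to apply the involution to these limits. Fix $x \in A$; since the involution is surjective, it suffices to show $\lim_{\lambda} x a_{\lambda}^{\ast} = x = \lim_{\lambda} a_{\lambda}^{\ast} x$. Writing $x = (x^{\ast})^{\ast}$ and using continuity of $\ast$ together with anti-multiplicativity, I would compute
\begin{equation*}
\underset{\lambda}{\lim}\; a_{\lambda}^{\ast} x = \underset{\lambda}{\lim}\; (x^{\ast} a_{\lambda})^{\ast} = \left( \underset{\lambda}{\lim}\; x^{\ast} a_{\lambda} \right)^{\ast} = (x^{\ast})^{\ast} = x,
\end{equation*}
where the middle equality uses that $\ast$ is continuous so it commutes with the net limit, and $\lim_{\lambda} x^{\ast} a_{\lambda} = x^{\ast}$ is the right-hand \textit{ai} relation applied to the element $x^{\ast}$. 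Symmetrically, $\lim_{\lambda} x a_{\lambda}^{\ast} = (\lim_{\lambda} a_{\lambda} x^{\ast})^{\ast} = (x^{\ast})^{\ast} = x$ using the left-hand \textit{ai} relation. Thus $(a_{\lambda}^{\ast})$ is an \textit{ai} of $(A,\tau)$.

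For the boundedness claim, I would note that a subset $S$ of $(A,\tau)$ is bounded iff $\sup_{x \in S} p(x) < \infty$ for each defining seminorm $p \in \Gamma$, and that in a locally C*- or real locally C*-algebra the involution is seminorm-preserving, i.e. $p(x^{\ast}) = p(x)$ for all $p \in \Gamma$ (this follows from the regularity relation $p(x^{\ast}x) = p(x)^2$). Hence $p(a_{\lambda}^{\ast}) = p(a_{\lambda})$ for each $\lambda$ and each $p$, so $(a_{\lambda}^{\ast})$ is bounded exactly when $(a_{\lambda})$ is, giving the \textit{bai} assertion.

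I expect the only genuine subtlety to be justifying that the involution commutes with the net limit; everything else is a direct manipulation of the defining relations. This interchange requires precisely the continuity of $\ast$ as a map $(A,\tau) \to (A,\tau)$, which is part of the standing hypothesis that $A$ is a topological *-algebra, so no additional argument about joint continuity of multiplication or completeness is needed here. If one did not wish to assume continuity of the involution outright, the fallback would be to verify $p(a_{\lambda}^{\ast} x - x) = p\big((x^{\ast} a_{\lambda} - x^{\ast})^{\ast}\big) = p(x^{\ast} a_{\lambda} - x^{\ast}) \to 0$ seminorm-by-seminorm, which again reduces to the seminorm-invariance $p(y^{\ast}) = p(y)$ and sidesteps any appeal to abstract continuity of $\ast$.
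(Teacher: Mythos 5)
Your argument is correct, and it is worth noting that the paper itself gives no proof of this lemma at all --- it simply writes ``Immediately follows from considerations in \cite{Mallios86}.'' Your computation $\lim_{\lambda} a_{\lambda}^{\ast}x = \bigl(\lim_{\lambda} x^{\ast}a_{\lambda}\bigr)^{\ast} = x$ (and its mirror image) is the standard argument that the citation is gesturing at, and the key hypothesis you isolate --- continuity of the involution, which is part of the definition of a topological *-algebra --- is exactly the right one; no joint continuity of multiplication or completeness is needed. One small caveat on the boundedness step: you justify it by appealing to $p(x^{\ast})=p(x)$ for the defining seminorms of a locally C*- or real locally C*-algebra, but the lemma is stated for an arbitrary topological *-algebra, whose topology need not be given by *-invariant seminorms (or by seminorms at all). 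The cleaner general argument is that the involution is a continuous, conjugate-linear bijection, hence carries von Neumann bounded sets to bounded sets, so $\{a_{\lambda}^{\ast}\}$ is bounded whenever $\{a_{\lambda}\}$ is. Your seminorm-by-seminorm fallback is fine for the m*-convex algebras to which the paper actually applies the lemma, but it is the narrower of the two routes.
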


\begin{proof}
Immediately follows from considerations in \cite{Mallios86}.
\end{proof}

The third one talks about the properties of \textit{bai} in certain
factor-algebras of some topological *-algebras and topological Jordan
algebras. Let $(A,\tau )$ be a real or complex topological *-algebra (or $%
(J,\tau )$ be a real Jordan topological algebra) with a saturated separating
family of seminorms. It is called an \textit{m-convex algebra}, if each
seminorm satisfies the \textit{submultiplicativity }inequality 
\begin{equation*}
p(xy)\leq p(x)p(y),
\end{equation*}%
for every $x,y\in A$ (resp. 
\begin{equation*}
p(x\circ y)\leq p(x)p(y),
\end{equation*}%
for every $x,y\in J$). It is called an \textit{m*-convex algebra}, if it is
m-convex and each seminorm satisfies the identity 
\begin{equation*}
p(x^{\ast })=p(x),
\end{equation*}%
for every $x\in A.$

\begin{lemma}
Let $(A,\tau )$ be a m*-convex algebra (or a Jordan topological m-convex
algebra $(J,\tau )$). Then, if $(a_{\lambda }),$ $\lambda \in \Lambda $, is
an ai, the net $(a_{\lambda ,p}),$ $\lambda \in \Lambda ,$ with 
\begin{equation*}
a_{\lambda ,p}\equiv a_{\lambda }+N_{p},
\end{equation*}%
$(a_{\lambda ,p})\in A$ (resp. $(a_{\lambda ,p})\in J$)$,$ $p\in \Gamma ,$ $%
\lambda \in \Lambda ,$ is an ai for both $(A,p)/N_{p}$ and $A_{p}$ (resp.
for both $(J,p)/N_{p}$ and $J_{p}$)$,$ for every $p\in \Gamma .$ Moreover, $%
(a_{\lambda ,p}),$ $\lambda \in \Lambda ,$ is bounded, whenever $(a_{\lambda
}),$ $\lambda \in \Lambda ,$ is bounded.
\end{lemma}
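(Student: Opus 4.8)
The plan is to verify directly that passing to the quotient and then to the completion preserves the approximate-identity property, exploiting the fact that the canonical map $x\mapsto x_{p}=x+N_{p}$ is both an algebra homomorphism and norm-contractive by construction of $\left\Vert .\right\Vert_{p}$. First I would fix $p\in\Gamma$ and let $\varphi_{p}:A\rightarrow (A,p)/N_{p}$ (resp. $\varphi_{p}:J\rightarrow (J,p)/N_{p}$) denote the quotient map, so that $a_{\lambda,p}=\varphi_{p}(a_{\lambda})$ and $x_{p}=\varphi_{p}(x)$. Since $\varphi_{p}$ is multiplicative, we have $a_{\lambda,p}\,x_{p}=\varphi_{p}(a_{\lambda}x)$ in the associative case and $a_{\lambda,p}\circ x_{p}=\varphi_{p}(a_{\lambda}\circ x)$ in the Jordan case. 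The key estimate is then
\begin{equation*}
\left\Vert a_{\lambda,p}\,x_{p}-x_{p}\right\Vert_{p}=\left\Vert\varphi_{p}(a_{\lambda}x-x)\right\Vert_{p}=p(a_{\lambda}x-x),
\end{equation*}
and the right-hand side tends to $0$ because $(a_{\lambda})$ is an ai of $(A,\tau)$ and the topology $\tau=\tau_{\Gamma}$ is determined by the seminorms $p\in\Gamma$, so $\lim_{\lambda}a_{\lambda}x=x$ in $(A,\tau)$ forces $\lim_{\lambda}p(a_{\lambda}x-x)=0$ for each individual $p$. The symmetric identity handles $x_{p}a_{\lambda,p}$, and the Jordan version is identical with $\circ$ in place of juxtaposition. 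This establishes that $(a_{\lambda,p})$ is an ai for $(A,p)/N_{p}$ (resp. $(J,p)/N_{p}$).

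Next I would promote the ai from the quotient normed algebra to its Banach completion $A_{p}$ (resp. $J_{p}$). Here I would invoke part i). of the first Lemma of the excerpt: the completion $A_{p}$ of $(A,p)/N_{p}$ is obtained by completing the underlying normed space, and since $(A,p)/N_{p}$ is an m*-convex (resp. m-convex) normed algebra its multiplication is jointly continuous, so the completion $A_{p}$ is again a Banach algebra and $(a_{\lambda,p})$ remains an ai of $A_{p}$. Concretely, for $y\in A_{p}$ and $\varepsilon>0$ choose $x_{p}\in (A,p)/N_{p}$ with $\left\Vert y-x_{p}\right\Vert_{p}<\varepsilon$; the submultiplicativity $\left\Vert a_{\lambda,p}(y-x_{p})\right\Vert_{p}\leq\left\Vert a_{\lambda,p}\right\Vert_{p}\left\Vert y-x_{p}\right\Vert_{p}$ together with the boundedness of $(a_{\lambda,p})$ (established below) and the already-proved convergence on the dense subalgebra gives $\lim_{\lambda}a_{\lambda,p}y=y$ by a standard $3\varepsilon$ argument. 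I would note that this density step is exactly why the boundedness claim must be proved in tandem rather than as an afterthought.

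Finally, for the boundedness assertion, I would observe that $\sup_{\lambda}\left\Vert a_{\lambda,p}\right\Vert_{p}=\sup_{\lambda}p(a_{\lambda})\leq\sup_{\lambda}\sup_{q\in\Gamma}q(a_{\lambda})=\sup_{\lambda}\left\Vert a_{\lambda}\right\Vert_{\infty}$, which is finite precisely when $(a_{\lambda})$ is a bounded subset of $(A,\tau)$ (resp. $(J,\tau)$); thus $(a_{\lambda,p})$ is norm-bounded in $(A,p)/N_{p}$, hence in $A_{p}$, whenever $(a_{\lambda})$ is bounded. I expect the main obstacle to be the completion step in the second paragraph rather than the quotient step: one must make sure the $3\varepsilon$-estimate is valid, which requires the uniform norm bound on $(a_{\lambda,p})$ (so that $\left\Vert a_{\lambda,p}(y-x_{p})\right\Vert_{p}$ stays small uniformly in $\lambda$), and this is why the boundedness clause is genuinely needed to pass the ai through the completion and is not merely a bookkeeping remark. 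The Jordan case presents no extra difficulty here, since the operator $T_{a_{\lambda,p}}$ is bounded with $\left\Vert T_{a_{\lambda,p}}\right\Vert\leq\left\Vert a_{\lambda,p}\right\Vert_{p}$ by m-convexity, so the same density argument applies verbatim with $a_{\lambda,p}\circ(\cdot)$ in place of left multiplication.
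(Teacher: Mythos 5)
Your proposal is correct, and it supplies in full the argument that the paper simply outsources: the paper's entire proof of this lemma is the one-line citation ``Immediately follows from considerations in \cite{Mallios86}.'' Your route --- observing that $\varphi_{p}$ is a multiplicative contraction with $\left\Vert \varphi_{p}(x)\right\Vert _{p}=p(x)$, so that $\left\Vert a_{\lambda ,p}x_{p}-x_{p}\right\Vert _{p}=p(a_{\lambda }x-x)\rightarrow 0$ seminorm by seminorm, then passing to $A_{p}$ by density --- is exactly the standard argument the citation points to, so in substance you are reconstructing rather than replacing the intended proof. The one point worth emphasizing is the tension you yourself flag: the statement of the lemma asserts the ai property for the completion $A_{p}$ for an arbitrary ai, but the $3\varepsilon$-argument (and indeed any argument) for extending pointwise convergence of $T_{a_{\lambda ,p}}$ from the dense subalgebra $(A,p)/N_{p}$ to all of $A_{p}$ requires the uniform bound $\sup_{\lambda }\left\Vert a_{\lambda ,p}\right\Vert _{p}<\infty$, i.e.\ $p$-boundedness of the net; without it the first term of the triangle estimate is not controlled. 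This is consistent with the paper's own Lemma 1(i), which is stated only for \emph{bounded} ai's when passing to completions, so the completion clause here should really be read with the boundedness hypothesis in force; your proof is the correct proof of the correctly-stated result, and your closing remark that the boundedness clause is not mere bookkeeping but the enabling hypothesis for the completion step is an improvement in precision over the source.
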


\begin{proof}
Immediately follows from considerations in \cite{Mallios86}.
\end{proof}

The following result is a real version of the theorem of Inoue (see \cite%
{Inoue71} for details).

\begin{theorem}
Let $(A,\tau )$ be a real locally C*-algebra, and $I$ be a dense ideal in $%
(A,\tau ).$ Then, $(A,\tau )$ has an ai $(a_{\lambda }),$ $\lambda \in
\Lambda ,$ consisting of elements of $I,$ such that:

1). The net $(a_{\lambda }),$ $\lambda \in \Lambda ,$ is increasing, in the
sense that 
\begin{equation*}
a_{\lambda }\geq \mathbf{0},
\end{equation*}%
for every $\lambda \in \Lambda ,$ and 
\begin{equation*}
a_{\lambda }\leq a_{\nu },
\end{equation*}%
for any $\lambda \leq \nu $ in $\Lambda .$

2). 
\begin{equation*}
p(a_{\lambda })\leq 1,
\end{equation*}%
for all $p\in \Gamma ,$ $\lambda \in \Lambda .$
\end{theorem}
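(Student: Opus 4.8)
The plan is to imitate Inoue's construction, building the approximate identity out of the positive part of the ideal, carrying out every estimate inside each quotient real C*-algebra $A_p = A/N_p$ and then lifting the conclusions to $A$ by the standard fact that positivity, order and invertibility in a projective limit are detected seminorm-wise (for real C*-algebras see \cite{Li03}). Concretely, I would set
\[
\Lambda = \{a \in I : a = a^{\ast},\ a \geq \mathbf{0},\ p(a) < 1 \text{ for all } p \in \Gamma\},
\]
partially ordered by $a \leq b$ (meaning $\pi_p(a) \leq \pi_p(b)$ in $A_p$ for every $p$), and take the self-indexed net $a_{\lambda} = \lambda$. With this choice, conclusions (1) and (2) hold by the very definition of $\Lambda$, so the entire content of the theorem reduces to two claims: that $\Lambda$ is upward directed, and that the self-indexed net is a two-sided approximate identity lying in $I$.

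First I would establish directedness, which I expect to be the main obstacle. Given $a,b \in \Lambda$, since $p(a) < 1$ for every $p$ the element $1-a$ is invertible in $A$ (its image $1 - \pi_p(a)$ is invertible in each $A_p$ because $\|\pi_p(a)\| = p(a) < 1$), so $a' := a(1-a)^{-1}$ is a well-defined positive element which lies in $I$ because $a \in I$ and $I$ is an ideal. Put $c' = a' + b' \in I$ and $c = c'(1+c')^{-1} \in I$. Applying the real continuous functional calculus to the self-adjoint elements and using that $t \mapsto t(1+t)^{-1}$ sends $[0,\infty)$ into $[0,1)$, one gets $\mathbf{0} \leq \pi_p(c)$ and $\|\pi_p(c)\| < 1$ in every $A_p$, hence $c \in \Lambda$. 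The operator antitonicity of $s \mapsto s^{-1}$ on positive invertibles (which holds verbatim in real C*-algebras via self-adjoint functional calculus) makes $t \mapsto t(1+t)^{-1}$ operator monotone, and a direct computation shows $\pi_p(a)(1+\pi_p(a'))\cdots$ collapses to give $\pi_p(a) \leq \pi_p(c)$ and $\pi_p(b) \leq \pi_p(c)$ for all $p$, so $a \leq c$ and $b \leq c$ in $A$. The delicate point here is keeping the bounding element $c$ inside $I$ while simultaneously controlling all the seminorms, and verifying that the needed order theory survives the passage from complex to real C*-algebras.

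Finally I would verify the approximate-identity property. Fix $x \in A$, a seminorm $p$, and $\varepsilon > 0$; by density of $I$ choose $y \in I$ with $p(x-y) < \varepsilon/2$, set $h = y^{\ast}y \in I$, and put $a_0 = nh(1+nh)^{-1} \in \Lambda$. Writing $\xi = \pi_p(x)$ and using $1 - \pi_p(a_0) = (1+n\pi_p(h))^{-1}$ together with the functional-calculus bound $\sup_{t \geq 0} t^{1/2}(1+nt)^{-1} = (2\sqrt{n})^{-1}$, one obtains $p(x - a_0 x) \leq (2\sqrt{n})^{-1} + p(x-y) < \varepsilon$ for $n$ large. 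Monotonicity then propagates the estimate up the net: for $a_{\lambda} \geq a_0$ one has $(1-\pi_p(a_{\lambda}))^{2} \leq 1 - \pi_p(a_{\lambda}) \leq 1 - \pi_p(a_0)$ (since $\mathbf{0} \leq 1 - \pi_p(a_{\lambda}) \leq 1$), whence $p(x - a_{\lambda}x)^{2} = \|\pi_p(x)^{\ast}(1-\pi_p(a_{\lambda}))^{2}\pi_p(x)\| \leq p(x)\,p(x - a_0 x) < p(x)\,\varepsilon$. Thus $\lim_{\lambda} a_{\lambda}x = x$ in $\tau$; applying this to $x^{\ast}$ and invoking $p(z^{\ast}) = p(z)$ (a consequence of regularity) together with the self-adjointness of the $a_{\lambda}$ yields $\lim_{\lambda} x a_{\lambda} = x$ as well, so $(a_{\lambda})$ is a two-sided approximate identity. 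Since every $a_{\lambda}$ lies in $I$, is positive with $p(a_{\lambda}) < 1$ for all $p$, and the net is indexed by the order itself, properties (1) and (2) follow immediately.
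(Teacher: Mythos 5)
Your proof is essentially correct, but it is not the route the paper takes. The paper disposes of the real case in a few lines by complexification: it forms $A_{\mathbb{C}}=A\dotplus iA$, observes that $I_{\mathbb{C}}=I\dotplus iI$ is a dense ideal of the complex locally C*-algebra $(A_{\mathbb{C}},\widehat{\tau })$, invokes Inoue's original theorem there to obtain an increasing ai $(u_{\lambda })$ in $I_{\mathbb{C}}$ with $\widehat{p}(u_{\lambda })\leq 1$, writes $u_{\lambda }=a_{\lambda }+ib_{\lambda }$ with $a_{\lambda },b_{\lambda }\in A$, and asserts that the real parts $(a_{\lambda })$ satisfy the required conditions. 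You instead re-run Inoue's construction intrinsically inside the real algebra: you index by the positive part of the open unit ball of $I$, prove upward directedness via the $a\mapsto a(\mathbf{1}-a)^{-1}$ trick and the operator monotonicity of $t\mapsto t(1+t)^{-1}$, and obtain the ai property from the functional-calculus bound $t(1+nt)^{-2}\leq (4n)^{-1}$ in each quotient $A_{p}$. Both approaches work. The paper's is much shorter but hides the real content in the unproved claim that passing to real parts preserves positivity, membership in $I$, monotonicity and the ai property (this rests on the canonical conjugation of $A_{\mathbb{C}}$ being a continuous positive *-automorphism); yours is longer but self-contained, yields the canonical index set, and is in fact the pattern the paper itself must follow for the Jordan case (Theorem 3), where no complexification shortcut exists. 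Two small points to tidy in your write-up: for the left estimate on $p((\mathbf{1}-a_{0})x)$ you should take $h=yy^{\ast }$ rather than $y^{\ast }y$ (or transpose the computation), and when $A$ is non-unital the expressions $(\mathbf{1}-a)^{-1}$ and $(\mathbf{1}+c^{\prime })^{-1}$ must be read in the unitization, noting that $I$ remains an ideal there; the paper's standing assumption that all algebras are unital covers this.
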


\begin{proof}
Let $(A,\tau )$ be a real locally C*-algebra. Then, from \cite%
{KatzFriedman06} it follows, that $(A_{%
\mathbb{C}
},\widehat{\tau })$ is a complex locally C*-algebra, where 
\begin{equation*}
A_{%
\mathbb{C}
}=A\dotplus iA,
\end{equation*}%
is the complexification of $A.$ One can easily see that if $I$ be a dense
ideal in $(A,\tau ),$ then $I_{%
\mathbb{C}
}$ is a dense ideal of $(A_{%
\mathbb{C}
},\widehat{\tau }),$ where 
\begin{equation*}
I_{%
\mathbb{C}
}=I\dotplus iI,
\end{equation*}%
the complexification of $I$ in $(A_{%
\mathbb{C}
},\widehat{\tau }).$ According to \cite{Inoue71}, there exists an ai $%
(u_{\lambda }),$ $\lambda \in \Lambda ,$ in $(A_{%
\mathbb{C}
},\widehat{\tau })$\ consisting of elements of $I_{%
\mathbb{C}
},$ such that: 

1). The net $(u_{\lambda }),$ $\lambda \in \Lambda ,$ is increasing, in the
sense that 
\begin{equation*}
u_{\lambda }\geq \mathbf{0},
\end{equation*}%
for every $\lambda \in \Lambda ,$ and 
\begin{equation*}
u_{\lambda }\leq u_{\nu },
\end{equation*}%
for any $\lambda \leq \nu $ in $\Lambda ;$

2). 
\begin{equation*}
\widehat{p}(u_{\lambda })\leq 1,
\end{equation*}%
for all $\widehat{p}\in \widehat{\Gamma },$ $\lambda \in \Lambda ,$ where
for each $p\in \Gamma ,$ defined on $A,$ its extention $\widehat{p}\in 
\widehat{\Gamma }$ on all $A_{%
\mathbb{C}
},$ is defined as: 
\begin{equation*}
\widehat{p}(x+iy)=\sqrt{p(x)^{2}+p(y)^{2}},
\end{equation*}%
for every $x,y\in A.$

Let now 
\begin{equation*}
u_{\lambda }=a_{\lambda }+ib_{\lambda },
\end{equation*}%
$\lambda \in \Lambda ,$ where $a_{\lambda },b_{\lambda }\in A.$ One can see
that the net $(a_{\lambda }),$ $\lambda \in \Lambda ,$ satisfies the
required conditions.
\end{proof}

The following result is a real version of Inoue's theorem from \cite{Inoue71}%
\ on existence of \textit{left} (resp. \textit{right}) \textit{ai} in real
locally C*-algebras with dense left (resp. right) ideals.

\begin{theorem}
Let $(A,\tau )$ be a real locally C*-algebra, and $I$ be a dense left (resp.
right) ideal in $(A,\tau ).$ Then, $(A,\tau )$ has a left (resp. right) ai $%
(a_{\lambda }),$ $\lambda \in \Lambda ,$ consisting of elements of $I,$ such
that:

1). The net $(a_{\lambda }),$ $\lambda \in \Lambda ,$ is increasing, in the
sense that 
\begin{equation*}
a_{\lambda }\geq \mathbf{0},
\end{equation*}%
for every $\lambda \in \Lambda ,$ and 
\begin{equation*}
a_{\lambda }\leq a_{\nu },
\end{equation*}%
for any $\lambda \leq \nu $ in $\Lambda .$

2). 
\begin{equation*}
p(a_{\lambda })\leq 1,
\end{equation*}%
for all $p\in \Gamma ,$ $\lambda \in \Lambda .$
\end{theorem}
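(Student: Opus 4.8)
The plan is to mimic the proof of the preceding theorem, reducing the one-sided real statement to Inoue's one-sided complex result via complexification. First I would form the complexification $A_{\mathbb{C}}=A\dotplus iA$, which by \cite{KatzFriedman06} is a complex locally C*-algebra, and check that $I_{\mathbb{C}}=I\dotplus iI$ is a dense \emph{left} (resp. \emph{right}) ideal of $(A_{\mathbb{C}},\widehat{\tau})$ whenever $I$ is a dense left (resp. right) ideal of $(A,\tau)$. The ideal property is a direct computation: for $a+ia'\in A_{\mathbb{C}}$ and $b+ib'\in I_{\mathbb{C}}$ the four real products $ab,a'b',ab',a'b$ all lie in $I$, so the product $(a+ia')(b+ib')$ lands in $I\dotplus iI$; density passes to the complexification since the real and imaginary parts of any element of $A_{\mathbb{C}}$ can be approximated separately from $I$.

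Next I would invoke the one-sided theorem of Inoue \cite{Inoue71} to produce a left (resp. right) ai $(u_{\lambda})$, $\lambda\in\Lambda$, of $(A_{\mathbb{C}},\widehat{\tau})$ consisting of elements of $I_{\mathbb{C}}$, increasing in the sense that $\mathbf{0}\leq u_{\lambda}\leq u_{\nu}$ for $\lambda\leq\nu$, and satisfying $\widehat{p}(u_{\lambda})\leq 1$ for every $\widehat{p}\in\widehat{\Gamma}$, where $\widehat{p}(x+iy)=\sqrt{p(x)^{2}+p(y)^{2}}$.

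The last and central step is to extract the real net. I would set $a_{\lambda}$ to be the real part of $u_{\lambda}$, i.e. $a_{\lambda}=\frac{1}{2}(u_{\lambda}+\overline{u_{\lambda}})$, where $\overline{a+ib}=a-ib$ is the canonical conjugation of $A_{\mathbb{C}}$. The point is that $\overline{\cdot}$ is a continuous conjugate-linear $*$-automorphism of $A_{\mathbb{C}}$ that fixes $A$ pointwise, maps $I_{\mathbb{C}}$ onto itself, and is $\widehat{p}$-isometric (since $\widehat{p}(\overline{z})=\widehat{p}(z)$); hence it carries positive elements to positive elements and preserves the order. Because conjugation fixes each $x\in A$, for such $x$ we get $\overline{u_{\lambda}}\,x=\overline{u_{\lambda}x}\to\overline{x}=x$, so $(\overline{u_{\lambda}})$ is again a left (resp. right) ai on $A$, and averaging gives $a_{\lambda}x=\frac{1}{2}(u_{\lambda}x+\overline{u_{\lambda}}x)\to x$. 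Moreover $a_{\lambda}\in A$ and $a_{\lambda}\in I_{\mathbb{C}}$, so $a_{\lambda}\in I_{\mathbb{C}}\cap A=I$; positivity and monotonicity of $(a_{\lambda})$ follow because $a_{\lambda}$ is a convex combination of the positive, order-monotone elements $u_{\lambda}$ and $\overline{u_{\lambda}}$; and $\widehat{p}(a_{\lambda})\leq\frac{1}{2}(\widehat{p}(u_{\lambda})+\widehat{p}(\overline{u_{\lambda}}))\leq 1$ together with $\widehat{p}(a_{\lambda})=p(a_{\lambda})$ yields condition 2).

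I expect the main obstacle to be exactly the point where one-sidedness interacts with the real reduction: one must be sure that restricting the left (resp. right) ai $(u_{\lambda})$ of the complex algebra to $A$ and symmetrizing does not destroy the one-sided limit relation. This is handled by the observation above that the conjugation fixes $A$ and is an algebra automorphism, so the left (resp. right) limit $\underset{\lambda}{\lim}\,u_{\lambda}x=x$ for $x\in A$ transports verbatim to $\overline{u_{\lambda}}$ and hence to the average $a_{\lambda}$; no two-sidedness is used anywhere, so the argument runs genuinely parallel to that of the previous theorem, with "ideal" and "ai" replaced throughout by their left (resp. right) counterparts.
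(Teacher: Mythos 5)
Your proposal is correct and follows essentially the same route as the paper: complexify $A$ and $I$, apply Inoue's one-sided theorem to $(A_{\mathbb{C}},\widehat{\tau})$ and $I_{\mathbb{C}}$, and take the real part $a_{\lambda}$ of $u_{\lambda}=a_{\lambda}+ib_{\lambda}$ (your $\frac{1}{2}(u_{\lambda}+\overline{u_{\lambda}})$ is exactly this). The only difference is that you spell out, via the properties of the canonical conjugation, the verification that the paper compresses into ``one can see that the net $(a_{\lambda})$ satisfies the required conditions.''
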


\begin{proof}
One can note that a complexification of a dense left (resp. right) ideal in
in $(A,\tau )$ is a dense left (resp. right) ideal in $(A_{%
\mathbb{C}
},\widehat{\tau }).$ With that it mind, the rest of the proof repeits the
proof of the preseeding theorem.
\end{proof}

Now we turn our attention to the case of Jordan algebras. The next result is
a version of the theorem of Inoue from \cite{Inoue71} on existence of 
\textit{bai} for locally JB-algebras with dense Jordan idals.

\begin{theorem}
Let $(J,\tau )$ be a locally JB-algebra, and $I$ be a dense ideal in $%
(J,\tau ).$ Then, $(J,\tau )$ has an ai $(a_{\lambda }),$ $\lambda \in
\Lambda ,$ consisting of elements of $I,$ such that:

1). The net $(a_{\lambda }),$ $\lambda \in \Lambda ,$ is increasing, in the
sense that 
\begin{equation*}
a_{\lambda }\geq \mathbf{0},
\end{equation*}%
for every $\lambda \in \Lambda ,$ and 
\begin{equation*}
a_{\lambda }\leq a_{\nu },
\end{equation*}%
for any $\lambda \leq \nu $ in $\Lambda .$

2). 
\begin{equation*}
p(a_{\lambda })\leq 1,
\end{equation*}%
for all $p\in \Gamma ,$ $\lambda \in \Lambda .$
\end{theorem}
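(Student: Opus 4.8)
The plan is to mirror Inoue's passage from C*-algebras to locally C*-algebras, but carried out on the Jordan side and with the factorwise input furnished by the order theory of JB-algebras rather than by a complexification. Write $(J,\tau )$ as the projective limit $\underset{\longleftarrow }{\lim }J_{p}$ of the JB-algebras $J_{p}$ attached to the defining seminorms $p\in \Gamma $, and let $\pi _{p}:J\rightarrow J_{p}$ denote the canonical continuous Jordan morphisms, so that $p(x)=\left\Vert \pi _{p}(x)\right\Vert _{J_{p}}$ for every $x\in J$. First I would check that the image $I_{p}=\pi _{p}(I)$ is a Jordan ideal of $J/N_{p}$ whose closure is a dense Jordan ideal of $J_{p}$: density is immediate because $\pi _{p}$ is continuous with dense range and $I$ is dense in $J$, while the ideal property is inherited since $\pi _{p}$ is a Jordan homomorphism intertwining the operators $T_{a}$.

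The key step is to produce, inside $I$ itself, a single upward directed family of positive contractions serving simultaneously for every $p$. Following the construction of an approximate identity in a JB-algebra (see \cite{Hanche-OlsenStoermer84}), I would set
\begin{equation*}
\Lambda =\{a\in I:a\geq \mathbf{0},\text{ }p(a)\leq 1\text{ for all }p\in
\Gamma \},
\end{equation*}
partially ordered by the Jordan order $\leq $, and take $a_{\lambda }=\lambda $ for $\lambda \in \Lambda $. By construction $\Lambda \subseteq J_{b}$, so condition 2) holds automatically, and condition 1) reduces to the assertion that $\Lambda $ is directed, whence $(a_{\lambda })$ is a positive increasing net. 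For a single element the singly generated associative subalgebra $B[a]$ is a commutative JB-algebra and thus carries a continuous functional calculus, so the substitutions $a\mapsto a\circ (\mathbf{1}-a)^{-1}$ and their inverses are legitimate there; the difficulty is to combine two elements that need not operator commute.

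The hard part will therefore be directedness, together with the convergence $a_{\lambda }\circ x\rightarrow x$. Directedness means that for $a,b\in \Lambda $ one can find $c\in \Lambda $ with $a\leq c$ and $b\leq c$; in the associative (C*) case this rests on the operator monotonicity of $t\mapsto t(1+t)^{-1}$, and the genuine obstacle here is the non-associativity, which forces this monotonicity to be established in its Jordan form. I expect to control the order behaviour of the relevant $U$-operators by means of the two identities recorded in the preliminaries, namely $(U_{x}y)^{2}=U_{x}U_{y}x^{2}$ and the MacDonald identity $U_{U_{x}y}z=U_{x}U_{y}U_{x}z$, together with the inclusion $U_{a}(A^{+})\subset A^{+}$ valid in every JB-algebra.

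Granting directedness, I would verify the approximate-identity property seminorm by seminorm. Since $p(a_{\lambda }\circ x-x)=\left\Vert \pi _{p}(a_{\lambda })\circ \pi _{p}(x)-\pi _{p}(x)\right\Vert _{J_{p}}$, it suffices to show that the images $\pi _{p}(a_{\lambda })$ form an approximate identity in the JB-algebra $J_{p}$; this is the factorwise base case, valid because $\pi _{p}(\Lambda )$ consists of positive contractions lying in the dense ideal $I_{p}$. The one point demanding care, and the real crux beyond directedness, is that membership in $\Lambda $ imposes the bound $p(a)\leq 1$ \emph{for every} $p$ at once, so I must confirm that the globally bounded positive elements of $I$ still map to a cofinal, hence approximating, family in each $J_{p}$; this is where the density of $I$ and of its bounded part $I\cap J_{b}$ is used. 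The conclusion $a_{\lambda }\circ x\rightarrow x$ for arbitrary $x\in J$ then follows by density of $I$ and the equicontinuity, for each fixed $p$, of the multiplication operators $T_{a_{\lambda }}$ guaranteed by $p(a_{\lambda })\leq 1$, while the positivity and the norm bound in 1) and 2) are immediate from the definition of $\Lambda $.
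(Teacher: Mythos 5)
Your overall strategy --- taking the ``canonical'' approximate identity $\Lambda=\{a\in I:\ a\geq \mathbf{0},\ p(a)\leq 1\ \text{for all } p\in\Gamma\}$ ordered by the Jordan order --- is a legitimate alternative in outline, but as written it has two genuine gaps, and they are exactly the two points you yourself flag as ``the hard part'' and ``the real crux.'' First, directedness of $\Lambda$ is nowhere established: you say you \emph{expect} to control it via $(U_{x}y)^{2}=U_{x}U_{y}x^{2}$ and the MacDonald identity, but no argument is given. The standard route passes through $a\mapsto a\circ(\mathbf{1}-a)^{-1}$ and back via $t\mapsto t(1+t)^{-1}$, and with your normalization $p(a)\leq 1$ (rather than a strict inequality or a resolvent-type substitution) the element $\mathbf{1}-a$ need not be invertible, so even the first substitution is not legitimate as stated; after repairing that one must actually invoke the Jordan operator monotonicity $\mathbf{0}<x\leq y\Rightarrow y^{-1}\leq x^{-1}$. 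Second, the convergence $a_{\lambda}\circ x\rightarrow x$ is asserted, not proved: the fact that $\pi_{p}(\Lambda)$ consists of positive contractions lying in a dense ideal of $J_{p}$ does not make it an approximate identity of $J_{p}$, since membership in $\Lambda$ is constrained by \emph{all} the seminorms simultaneously and so $\pi_{p}(\Lambda)$ is in general a proper subfamily of the canonical approximate identity of $I_{p}$; and even granting cofinality, convergence of the net indexed by $(\Lambda,\leq)$ requires an eventual estimate, i.e.\ that $p(c\circ x-x)$ stays small for every $c\geq a$ once it is small for one $a$. You acknowledge both points but defer them, so the proof is incomplete precisely where the content lies.

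The paper closes these gaps by a different, explicit construction: it indexes by the finite subsets $\lambda=\{x_{1},\dots,x_{n}\}$ of $I$, forms $b_{\lambda}=\sum_{i=1}^{n}x_{i}^{2}\in I\cap J_{+}$, and sets $a_{\lambda}=b_{\lambda}\circ(b_{\lambda}+\frac{\mathbf{1}}{n})^{-1}$ by functional calculus in the associative subalgebra generated by $b_{\lambda}$ and $\mathbf{1}$ (Shirshov--Cohn plus the spectral theorem for locally JB-algebras). This yields the quantitative bound $((a_{\lambda}-\mathbf{1})\circ x_{i})^{2}\leq\frac{1}{4n}\mathbf{1}$, hence $p(a_{\lambda}\circ x_{i}-x_{i})^{2}\leq\frac{1}{4n}$, which simultaneously gives convergence on $I$ (then on $J$ by density together with $p(a_{\lambda})\leq 1$) and, via $a_{\lambda}=\mathbf{1}-\frac{1}{n}(b_{\lambda}+\frac{\mathbf{1}}{n})^{-1}$ and operator monotonicity, the increasing property of the net. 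If you wish to keep your index set you would still need to reproduce essentially this computation to obtain directedness and the eventual estimate, so adopting the paper's construction outright is the more economical course.
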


\begin{proof}
Let us first consider the set 
\begin{equation*}
\Lambda =\{F\subseteq I:F\text{- \textit{finite}}\},
\end{equation*}%
ordered by inclusion. For each 
\begin{equation*}
\lambda =\{x_{1},x_{2},...,x_{n}\},
\end{equation*}%
we put 
\begin{equation*}
b_{\lambda }=\dsum\limits_{i=1}^{n}x_{i}^{2}.
\end{equation*}%
For what follows we need a definition and a few lemmas about positive
elements and spectrum in $J$. 

If $J$ is a unital locally JB-algebra, and $x\in J,$ we denote by $C(x)$ the
smallest locally JB-subalgebra containing $x$ and $\mathbf{1}$. According to
Shirshov-Cohn theorem (\cite{Hanche-OlsenStoermer84}), $C(x)$ is
associative. We define the \textit{spectrum} of $x$ in $J,$ denoted by $%
sp_{J}(x)$, 
\begin{equation*}
sp_{J}(x)=\{\alpha \in 
\mathbb{R}
:(x-\alpha \mathbf{1)}\text{ \textit{doesn't have an inverse in} }C(x)\}.
\end{equation*}

When the algebra is not unital, we first adjoint a unit (\cite%
{Hanche-OlsenStoermer84}), and then compute the spectrum in the unitization.

An element $x\in J$ is called \textit{positive}, and we write 
\begin{equation*}
x\geq \mathbf{0},
\end{equation*}%
if 
\begin{equation*}
sp_{J}(x)\subseteq \lbrack 0,\infty ).
\end{equation*}%
We denote by $J_{+}$ the set of all positive elements in $J$.

\begin{lemma}
Let $(J,\tau )$ be a locally JB-algebra, and 
\begin{equation*}
J=\underset{\longleftarrow }{\lim }J_{p},
\end{equation*}%
$p\in \Gamma ,$ be the Arens-Michael decomposition of $J$ as a projective
limit of a projective family of JB-algebras, and 
\begin{equation*}
\pi _{p}:J\longrightarrow J_{p},
\end{equation*}%
be the continuous projection of $J$ onto $J_{p},$ for each $p\in \Gamma .$
The following conditions are equivalent for $x\in J$: 

1). 
\begin{equation*}
x\geq \mathbf{0};
\end{equation*}

2). 
\begin{equation*}
x=y^{2},
\end{equation*}%
for some $y\in J;$

3). 
\begin{equation*}
x_{p}\geq \mathbf{0}_{p},
\end{equation*}%
for each 
\begin{equation*}
x_{p}=\pi _{p}(x)\in J_{p},
\end{equation*}%
$p\in \Gamma $, and $\mathbf{0}_{p}$ is the zero-element of $J_{p}.$
\end{lemma}

\begin{proof}
Easily follows from Arens-Michael decompostion and correspondent properties
of JB-algebras (see \cite{Hanche-OlsenStoermer84}, \cite{KatzFriedman06}).
\end{proof}

\begin{corollary}
Let $(J,\tau )$ be a locally JB-algebra. Then 
\begin{equation*}
J_{+}=\{x^{2}:x\in J\}.
\end{equation*}
\end{corollary}

\begin{proof}
Evident.
\end{proof}

\begin{corollary}
Let $(J,\tau )$ be a locally JB-algebra. Then $J$ is formally real.
\end{corollary}

\begin{proof}
Easily follows from Arens-Michael decomposition and the correspondent
property of JB-algebras.
\end{proof}

\begin{lemma}
Let $(J,\tau )$ be a locally JB-algebra. Then $J_{+}$ is a closed convex
cone, such that 
\begin{equation*}
J_{+}\cap (-J_{+})=\{\mathbf{0}\}.
\end{equation*}
\end{lemma}

\begin{proof}
Clearly follows from Arens-Michael decomposition and correspondent fact for
JB-algebras (\cite{Hanche-OlsenStoermer84}, \cite{KatzFriedman06}).
\end{proof}

\begin{corollary}
Let $(J,\tau )$ be a locally JB-algebra, and $x,y\in J.$ The following
statements hold:

1). 
\begin{equation*}
x\leq y\Longrightarrow U_{z}x\leq U_{z}y,
\end{equation*}
for all $z\in J;$

2). 
\begin{equation*}
\mathbf{0}\leq x\leq y\Longrightarrow p(x)\leq p(y),
\end{equation*}%
for all $p\in \Gamma ;$

3). 
\begin{equation*}
\mathbf{0}\leq x\leq y\Longrightarrow \mathbf{0}\leq x^{1/2}\leq y^{1/2}.
\end{equation*}

In the case when $(J,\tau )$ is unital, one has:

4). 
\begin{equation*}
x>\mathbf{0}\Longrightarrow x\in G_{J},
\end{equation*}%
where $G_{J}$ is the set of invertible elements in $J$;

5). 
\begin{equation*}
x\geq \mathbf{1}\Longrightarrow x^{-1}\leq \mathbf{1};
\end{equation*}

6). 
\begin{equation*}
\mathbf{0}<x\leq y\Longrightarrow y^{-1}\leq x^{-1}.
\end{equation*}
\end{corollary}

\begin{proof}
All properties follow from Arens-Michael decomposition of the algebra $%
(J,\tau )$\ and corresponding properties of JB-algebras (\cite%
{Hanche-OlsenStoermer84}, \cite{KatzFriedman06}).
\end{proof}

Now, based on the presiding lemmas and corollaries, one can easily see that 
\begin{equation*}
b_{\lambda }\in I\cap J_{+},
\end{equation*}%
for every $\lambda \in \Lambda $. 

Now, we need the following lemma.

\begin{lemma}
Let $(J,\tau )$ be a non-unital locally JB-algebra, and 
\begin{equation*}
J=\underset{\longleftarrow }{\lim }J_{p},
\end{equation*}%
$p\in \Gamma ,$ be the Arens-Michael decomposition of $J$ as a projective
limit of a projective family of JB-algebras, and 
\begin{equation*}
\pi _{p}:J\longrightarrow J_{p},
\end{equation*}%
be the continuous projection of $J$ onto $J_{p},$ for each $p\in \Gamma .$
There exits a unique unital locally JB-algebra $(J_{\mathbf{1}},\tau
_{1}^{\prime }),$ such that $(J,\tau )$ is a locally JB-subalgebra, and 
\begin{equation*}
J_{\mathbf{1}}=\underset{\longleftarrow }{\lim }J_{\mathbf{1},p^{\prime }},
\end{equation*}%
$p^{\prime }\in \Gamma ^{\prime },$ is the Arens-Michael decomposition of $%
J_{\mathbf{1}}$ as a projective limit of a projective family of unital
JB-algebras $J_{\mathbf{1},p}$, and each $J_{\mathbf{1},p}$ is the
unitization of a correspondent $J_{p},$ for each $p^{\prime }\in \Gamma
^{\prime }$ is the extension of a correspondent $p\in \Gamma .$
\end{lemma}

\begin{proof}
Easily obtained using a combination of arguments in \cite{KatzFriedman06}
and \cite{Hanche-OlsenStoermer84}.
\end{proof}

Let now $M$ be the locally JB-subalgebra of $(J_{\mathbf{1}},\tau
_{1}^{\prime }),$ generated by two elements- $b_{\lambda },$ and $\mathbf{1}$%
. Accorging to Shirshov-Cohn theorem (see \cite{Hanche-OlsenStoermer84}),
this subalgebra is associative. If $\ $%
\begin{equation*}
S\equiv sp_{J}(b_{\lambda })=sp_{J_{\mathbf{1}}}(b_{\lambda })\subseteq
\lbrack 0,\infty ),
\end{equation*}%
and 
\begin{equation*}
f(t)=t(t+\frac{1}{n})^{-1},
\end{equation*}%
for every $t\in 
\mathbb{R}
,$ $n\in 
\mathbb{N}
,$ we obtain that 
\begin{equation*}
f|_{S}\in C(S).
\end{equation*}

Accorging to Spectral theorem for locally JB-algebras (see \cite%
{KatzFriedman06}), $C(S)$ is embedded in $M$ by means of a unique
topological injective morphism $\Phi ,$ such that 
\begin{equation*}
\Phi (\mathbf{1}_{C(S)})=\mathbf{1,}
\end{equation*}%
and 
\begin{equation*}
\Phi (id_{S})=x,
\end{equation*}%
where $\mathbf{1}_{C(S)}$ is the constant function $1$ on $S$, and $id_{S}$
is the identity map of $S.$ Therefore, we can define 
\begin{equation*}
a_{\lambda }=\Phi (f|_{S})=b_{\lambda }\circ (b_{\lambda }+\frac{\mathbf{1}}{%
n})^{-1}\in M,
\end{equation*}%
$\lambda \in \Lambda .$

Now we need the following lemma.

\begin{lemma}
Let $(P,\tau _{P})$ and $(Q,\tau _{Q})$ be two locally JB-algebras, and 
\begin{equation*}
\varphi :(P,\tau _{P})\longrightarrow (Q,\tau _{Q}),
\end{equation*}%
be a Jordan morphism. Then, 
\begin{equation*}
\varphi (P_{+})=Q_{+}\cap \varphi (P).
\end{equation*}
\end{lemma}

\begin{proof}
Obvious.
\end{proof}

\begin{corollary}
Let $(P,\tau _{P})$ be a locally JB-algebra, and $Q$ be a closed Jordan
subalgebra of $(P,\tau _{P})$, with 
\begin{equation*}
\tau _{Q}=\tau _{P}|_{Q}.
\end{equation*}%
Then 
\begin{equation*}
Q_{+}=P_{+}\cap Q.
\end{equation*}
\end{corollary}

\begin{proof}
Obvious.
\end{proof}

The presiding corollary implies that 
\begin{equation*}
b_{\lambda }+\frac{\mathbf{1}}{n}>\mathbf{0,}
\end{equation*}%
in $M.$ From Corollary 3.4 it follows that 
\begin{equation*}
(b_{\lambda }+\frac{\mathbf{1}}{n})\in M,
\end{equation*}%
is invertible in $M$. Since 
\begin{equation*}
0\leq f|_{S}\leq \mathbf{1}_{C(S)},
\end{equation*}%
the presiding lemma implies that 
\begin{equation*}
\mathbf{0}\leq a_{\lambda }\leq \mathbf{1},
\end{equation*}%
$\lambda \in \Lambda $.

One now can see that 
\begin{equation*}
a_{\lambda }\in I\cap J_{+},
\end{equation*}%
for all $\lambda \in \Lambda ,$ and 
\begin{equation*}
p^{\prime }(a_{\lambda })=p(a_{\lambda })\leq 1,
\end{equation*}%
for all $p\in \Gamma ,$ $p^{\prime }\in \Gamma ^{\prime },$ and $\lambda \in
\Lambda .$

A computation shows that 
\begin{equation*}
\dsum\limits_{i=1}^{n}((a_{\lambda }-\mathbf{1})\circ
x_{i})^{2}=U_{(a_{\lambda }-\mathbf{1)}}b_{\lambda }=\mathbf{(}a_{\lambda }-%
\mathbf{1)}\text{ }\mathbf{\circ }\text{ }b_{\lambda }\circ (a_{\lambda }-%
\mathbf{1)=}\text{ }n^{-2}b_{\lambda }\circ (b_{\lambda }+\frac{\mathbf{1}}{n%
})^{-2}.
\end{equation*}

Now, taking a function 
\begin{equation*}
g(t)=t(t+\frac{1}{n})^{-2},
\end{equation*}%
for every $t\in 
\mathbb{R}
,$ $n\in 
\mathbb{N}
,$ one can see that it has a maximum value at 
\begin{equation*}
t=\frac{1}{n},
\end{equation*}%
so that 
\begin{equation*}
0\leq g|_{S}\leq \frac{n}{4}\mathbf{1}_{C(S)}.
\end{equation*}%
Therefore, we get 
\begin{equation*}
0\leq \Phi (g|_{S})=b_{\lambda }\circ (b_{\lambda }+\frac{\mathbf{1}}{n}%
)^{-2}\leq \frac{n}{4}\mathbf{1},
\end{equation*}%
using presiding calculations, we obtain 
\begin{equation*}
((a_{\lambda }-\mathbf{1})\circ x_{i})^{2}\leq \frac{1}{4n}\mathbf{1,}
\end{equation*}%
for every $i=1,...,n.$ Applying Corollary 3.2 we get 
\begin{equation*}
p^{\prime }((a_{\lambda }-\mathbf{1})\circ x_{i})^{2}=p(a_{\lambda }\circ
x_{i}-x_{i})^{2}\leq \frac{1}{4n},
\end{equation*}%
for all $p\in \Gamma ,$ $p^{\prime }\in \Gamma ^{\prime },$ $i=1,...,n.$

Let now $\varepsilon $ be an arbitrary small positive real number, and $x\in
I.$ Let $\lambda (\varepsilon )$ be a finite subset of I with n elements,
such that $x\in \lambda (\varepsilon ),$ and 
\begin{equation*}
n>\frac{1}{\varepsilon ^{2}}.
\end{equation*}%
Then, based on presiding inequalities, we get that 
\begin{equation*}
p(a_{\lambda }\circ x-x)<\varepsilon ,
\end{equation*}%
for every 
\begin{equation*}
\lambda \geq \lambda (\varepsilon ),
\end{equation*}%
and $p\in \Gamma .$ Thus we obtain that 
\begin{equation*}
\underset{\lambda }{\lim }a_{\lambda }\circ x=x,
\end{equation*}%
for every $x\in I,$ and, because $I$ is dense in $(J,\tau ),$ and 
\begin{equation*}
p(a_{\lambda })\leq 1,
\end{equation*}%
for any $\lambda \in \Lambda ,$ and $p\in \Gamma ,$ we get that  
\begin{equation*}
\underset{\lambda }{\lim }a_{\lambda }\circ x=x,
\end{equation*}%
for every $x\in J.$

It remains to show that 
\begin{equation*}
a_{\lambda }\leq a_{\nu },
\end{equation*}%
for any 
\begin{equation*}
\lambda \leq \nu ,
\end{equation*}%
$\lambda ,\nu \in \Lambda .$ Let 
\begin{equation*}
\lambda =\{x_{1},...,x_{n}\},
\end{equation*}%
and 
\begin{equation*}
\nu =\{x_{1},...,x_{m}\},
\end{equation*}%
be in $\Lambda $ with $n\leq m.$ Then 
\begin{equation*}
b_{\nu }-b_{\lambda }=\dsum\limits_{i=n+1}^{m}x_{i}^{2}\in J_{+}.
\end{equation*}%
Moreover, 
\begin{equation*}
\mathbf{0}<b_{\lambda }+\frac{\mathbf{1}}{n}\leq b_{\nu }+\frac{\mathbf{1}}{n%
},
\end{equation*}%
therefore, based on presiding considerations we obtain that 
\begin{equation*}
(b_{\nu }+\frac{\mathbf{1}}{n})^{-1}\leq (b_{\lambda }+\frac{\mathbf{1}}{n}%
)^{-1}.
\end{equation*}%
One can notice now that for real non-negative $t,$ since $n\leq m,$ 
\begin{equation*}
\frac{1}{n}(t+\frac{1}{n})^{-1}\geq \frac{1}{m}(t+\frac{1}{m})^{-1}.
\end{equation*}%
Therefore, from the Spectral theorem it follows that 
\begin{equation*}
\frac{1}{n}(b_{\nu }+\frac{\mathbf{1}}{n})^{-1}\geq \frac{1}{m}(b_{\nu }+%
\frac{\mathbf{1}}{m})^{-1},
\end{equation*}%
and finally 
\begin{equation*}
a_{\lambda }=\mathbf{1}-\frac{1}{n}(b_{\lambda }+\frac{\mathbf{1}}{n}%
)^{-1}\leq \mathbf{1}-\frac{1}{n}(b_{\nu }+\frac{\mathbf{1}}{n})^{-1}\leq 
\mathbf{1}-\frac{1}{m}(b_{\nu }+\frac{\mathbf{1}}{m})^{-1}=a_{\nu }.
\end{equation*}
\end{proof}

The following result is a version of Inoue's theorem for existence of 
\textit{bqai} in locally JB-algebras with dense quadratic ideals.

\begin{theorem}
Let $(J,\tau )$ be a locally JB-algebra, and $I$ be a dense quadratic ideal
in $(J,\tau ).$ Then, $(J,\tau )$ has an qai $(a_{\lambda }),$ $\lambda \in
\Lambda ,$ consisting of elements of $I,$ such that:

1). The net $(a_{\lambda }),$ $\lambda \in \Lambda ,$ is increasing, in the
sense that 
\begin{equation*}
a_{\lambda }\geq \mathbf{0},
\end{equation*}%
for every $\lambda \in \Lambda ,$ and 
\begin{equation*}
a_{\lambda }\leq a_{\nu },
\end{equation*}%
for any $\lambda \leq \nu $ in $\Lambda .$

2). 
\begin{equation*}
p(a_{\lambda })\leq 1,
\end{equation*}%
for all $p\in \Gamma ,$ $\lambda \in \Lambda .$
\end{theorem}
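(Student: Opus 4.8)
The plan is to re-use the very net that serves the preceding (Jordan-ideal) theorem and to upgrade its approximate-identity property from the linear product to the quadratic map $U$. I would order the finite subsets $\Lambda=\{F\subseteq I:F\text{ finite}\}$ by inclusion, and for $\lambda=\{x_{1},\dots ,x_{n}\}$ set $b_{\lambda}=\sum_{i=1}^{n}x_{i}^{2}$ and $a_{\lambda}=\Phi(f|_{S})=b_{\lambda}\circ (b_{\lambda}+\tfrac{\mathbf 1}{n})^{-1}$ with $f(t)=t(t+\tfrac1n)^{-1}$, exactly as before, working inside the associative subalgebra $M=C(b_{\lambda})\subseteq J_{\mathbf 1}$. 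The bounds $\mathbf 0\le a_{\lambda}\le \mathbf 1$ (hence $p(a_{\lambda})\le 1$), the monotonicity $a_{\lambda}\le a_{\nu}$ for $\lambda\le \nu$, and positivity $a_{\lambda}\ge \mathbf 0$ are obtained verbatim from the proof of the previous theorem, since the construction of $a_{\lambda}$ is identical. The one genuinely new bookkeeping point is that $I$ is now only a quadratic (inner) ideal: here $x_{i}^{2}=U_{x_{i}}\mathbf 1\in I$ gives $b_{\lambda}\in I$, and from $b_{\lambda}\in I$ one gets $b_{\lambda}^{\,k}=U_{b_{\lambda}}b_{\lambda}^{\,k-2}\in I$ for all $k\ge 2$, so every power $b_{\lambda}^{\,k}$, $k\ge 1$, lies in $I$; as $a_{\lambda}$ is a uniform limit on the compact set $S$ of polynomials in $b_{\lambda}$ with zero constant term, and $I$ is closed, $a_{\lambda}\in I$.

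The heart of the matter is the estimate of $U_{a_{\lambda}}x-x$. I would set $e=\mathbf 1-a_{\lambda}=\tfrac1n(b_{\lambda}+\tfrac{\mathbf 1}{n})^{-1}$, so $\mathbf 0\le e\le \mathbf 1$. Functional calculus in $M$ gives $U_{e}b_{\lambda}=e^{2}\circ b_{\lambda}=\tfrac1{n^{2}}\,b_{\lambda}\circ (b_{\lambda}+\tfrac{\mathbf 1}{n})^{-2}=\tfrac1{n^{2}}\Phi(g|_{S})$ with $g(t)=t(t+\tfrac1n)^{-2}$, whose maximum is $\tfrac n4$ at $t=\tfrac1n$; hence $U_{e}b_{\lambda}\le \tfrac1{4n}\mathbf 1$. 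For $x\in \lambda$ one has $x^{2}\le b_{\lambda}$ (because $b_{\lambda}-x^{2}=\sum_{j\ne i}x_{j}^{2}\ge \mathbf 0$), and $U_{e}$ is positive and order-preserving (Corollary 3.4), so $\mathbf 0\le U_{e}x^{2}\le U_{e}b_{\lambda}\le \tfrac1{4n}\mathbf 1$; by monotonicity of the defining seminorms on $J_{+}$ (Corollary 3.2), $p(U_{e}x^{2})\le \tfrac1{4n}$ for every $p\in \Gamma$.

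The obstacle, and the place where the quadratic case departs from the linear one, is that $p(U_{e}x^{2})$ being small does not obviously make $U_{e}x$ small: the crude bound $p(U_{e}x)\le p(e)^{2}p(x)$ does not tend to $0$. To overcome this I would invoke the Shirshov--Cohn theorem. The closed JB-subalgebra of $J_{\mathbf 1}$ generated by the two elements $b_{\lambda}$ and $x$ is special, and after passing to the quotient JB-algebra $J_{\mathbf 1,p^{\prime}}$ I realize it isometrically as a JC-algebra of self-adjoint operators, under which $c\circ d$ becomes the symmetrized product and $U_{c}d$ becomes $cdc$. Then $e,a_{\lambda}$ (being functions of $b_{\lambda}$) lie in this subalgebra, and $p^{\prime}(U_{e}x^{2})=\|xe\|^{2}$, so $\|ex\|=\|xe\|\le \tfrac1{2\sqrt n}$. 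Feeding this one-sided estimate into the operator identity $x-a_{\lambda}xa_{\lambda}=ex+a_{\lambda}xe$ and using $\|a_{\lambda}\|\le 1$ yields $p(U_{a_{\lambda}}x-x)=\|x-a_{\lambda}xa_{\lambda}\|\le \tfrac1{\sqrt n}$, uniformly in $p\in \Gamma$. This single step is the crux of the whole argument.

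Finally I would assemble the approximate identity. Given $x\in I$ and $\varepsilon >0$, any $\lambda\ge \lambda(\varepsilon)$, where $\lambda(\varepsilon)\ni x$ is chosen with $|\lambda(\varepsilon)|>1/\varepsilon^{2}$, contains $x$ and has $|\lambda|>1/\varepsilon^{2}$, so $p(U_{a_{\lambda}}x-x)\le 1/\sqrt{|\lambda|}<\varepsilon$; thus $\lim_{\lambda}U_{a_{\lambda}}x=x$ for all $x\in I$. For arbitrary $x\in J$ I would use density of $I$ together with the uniform bound $p(U_{a_{\lambda}}y)\le p(a_{\lambda})^{2}p(y)\le p(y)$ (from $\|U_{c}\|=\|c\|^{2}$ in each $J_{\mathbf 1,p^{\prime}}$): picking $y\in I$ with $p(x-y)$ small and splitting $U_{a_{\lambda}}x-x$ into three terms gives $\lim_{\lambda}U_{a_{\lambda}}x=x$. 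Combined with $p(a_{\lambda})\le 1$, and with positivity and monotonicity carried over unchanged from the previous theorem, this exhibits $(a_{\lambda})$ as the required bounded quadratic approximate identity lying in $I$. I expect the Shirshov--Cohn passage of the third paragraph, which converts the two-sided positivity estimate on $U_{e}x^{2}$ into a one-sided operator estimate controlling $U_{a_{\lambda}}x-x$, to be the only real difficulty.
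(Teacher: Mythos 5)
Your proposal is correct, and at the level of the construction it coincides with the paper's: the paper's entire proof of this theorem is the single sentence that it ``follows step-by-step the proof of the preceding theorem,'' i.e.\ the same net $a_{\lambda}=b_{\lambda}\circ (b_{\lambda}+\frac{\mathbf 1}{n})^{-1}$ indexed by the finite subsets of $I$, with positivity, monotonicity and the bound $p(a_{\lambda})\leq 1$ imported unchanged. What you add, and what the paper's one-line proof silently skips, are exactly the two points at which the quadratic case does \emph{not} follow verbatim. First, $a_{\lambda}\in I$ is no longer immediate from $T_{b_{\lambda}}(J)\subseteq I$ as in the Jordan-ideal case; your reduction to powers of $b_{\lambda}$ via $x_{i}^{2}=U_{x_{i}}\mathbf 1$ and $b_{\lambda}^{k}=U_{b_{\lambda}}b_{\lambda}^{k-2}$, followed by closedness of $I$, handles this (it leans on the paper's standing unitality convention for $U_{x_{i}}\mathbf 1$, and note that $S=sp_{J}(b_{\lambda})$ need not be compact, so the polynomial approximation should be taken uniformly on compacta, which is what convergence in the projective topology actually requires). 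Second, and more substantively, the estimate $p(a_{\lambda}\circ x_{i}-x_{i})\leq \frac{1}{2\sqrt{n}}$ obtained in the preceding proof controls the linear approximate identity, not the quadratic one; your Shirshov--Cohn step --- realizing the JB-subalgebra generated by $b_{\lambda}$, $x$ and $\mathbf 1$ in each $J_{\mathbf 1,p^{\prime}}$ as a JC-algebra, extracting $\Vert ex\Vert \leq \frac{1}{2\sqrt{n}}$ from $U_{e}x^{2}\leq \frac{1}{4n}\mathbf 1$ with $e=\mathbf 1-a_{\lambda}$, and feeding it into $x-a_{\lambda}xa_{\lambda}=ex+a_{\lambda}xe$ --- is precisely the missing argument, and it is sound, as is the concluding passage from $I$ to all of $J$ via density and $p(U_{a_{\lambda}}y)\leq p(a_{\lambda})^{2}p(y)$. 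In short, your proof takes the same route the paper gestures at but supplies the genuinely new content that ``step-by-step'' conceals; the two small inaccuracies noted above do not affect the argument.
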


\begin{proof}
Follows step-by-step the proof of the presiding Theorem 3.
\end{proof}

\section{Existence of dense ideals in real locally C*-algebras and locally
JB-algebras with unbounded elements}

In \cite{Fritzsche82} Fritzsche established that that if a complex unital
locally C*-algebra has an unbounded element then it also has a proper dense
left (resp. right) ideal.

The next result is a real analogue of the theorem of Fritzsche from \cite%
{Fritzsche82} for real locally C*-algebras.

\begin{theorem}
Let $(A,\tau )$ be a real unital locally C*-algebra, and $x\in A$, be such
that $x\notin A_{b},$ where 
\begin{equation*}
A_{b}=\{x\in A:\left\Vert x\right\Vert _{\infty }=\underset{p\in \Gamma (E)}{%
\sup }p(x)<\infty \}.
\end{equation*}%
Then $(A,\tau )$ has a proper dense left (resp. right) ideal $I$.
\end{theorem}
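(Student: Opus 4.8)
The plan is to follow the pattern of the preceding theorems: transfer the problem to the complexification, invoke Fritzsche's complex result \cite{Fritzsche82}, and then push the resulting ideal back down to $A$. First I would form
\[
A_{\mathbb{C}}=A\dotplus iA,
\]
which by \cite{KatzFriedman06} is a complex locally C*-algebra, unital since $A$ is, carrying the seminorms $\widehat{p}(a+ib)=\sqrt{p(a)^{2}+p(b)^{2}}$, $p\in\Gamma$. Because $\widehat{p}(x)=\widehat{p}(x+i\mathbf{0})=p(x)$ for every $p\in\Gamma$, we get $\sup_{\widehat{p}}\widehat{p}(x)=\sup_{p}p(x)=\infty$, so the given element remains unbounded in $A_{\mathbb{C}}$, that is $x\notin(A_{\mathbb{C}})_{b}$. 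Fritzsche's theorem \cite{Fritzsche82} then produces a proper dense left (resp. right) ideal $J$ of $(A_{\mathbb{C}},\widehat{\tau})$.

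To descend I would use the conjugation
\[
\sigma:A_{\mathbb{C}}\rightarrow A_{\mathbb{C}},\qquad\sigma(a+ib)=a-ib,
\]
noting that $\sigma$ is a conjugate-linear, multiplicative, involution-preserving automorphism with fixed-point set exactly $A$, and that it preserves each seminorm, $\widehat{p}\circ\sigma=\widehat{p}$, hence is a topological automorphism. Setting $I=J\cap A$, two points are routine: $I$ is a left ideal of $A$, since for $a\in A$ and $y\in I$ one has $ay\in J$ (as $a\in A_{\mathbb{C}}$ and $J$ is a left ideal) and $ay\in A$; and $I$ is dense in $A$, since for $a\in A$ one chooses $z=u+iv\in J$ with $\widehat{p}(a-z)<\varepsilon$ and then $p(a-u)\le\widehat{p}(a-z)<\varepsilon$, so the real part $u$ of $z$ approximates $a$.

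The delicate point --- which I expect to be the main obstacle --- is that this $u=\operatorname{Re}z$ need not lie in $J$, and $I$ need not be proper, unless $J$ is $\sigma$-invariant; reconciling density with properness of the descended ideal is the real content of the theorem. I would secure invariance conceptually: Fritzsche's ideal is manufactured canonically from the single element $x$, and since $x=\sigma(x)$ is fixed and $\sigma$ is a topological $*$-automorphism of $A_{\mathbb{C}}$, the construction commutes with $\sigma$, giving $\sigma(J)=J$. Lacking such functoriality one replaces $J$ by the $\sigma$-invariant proper left ideal $J\cap\sigma(J)$ (here $\sigma(J)$ is again a proper dense left ideal because $\sigma$ is a topological automorphism), the remaining task being to prove this intersection is still dense; I would attack that through the Arens--Michael decomposition $A_{\mathbb{C}}=\underset{\longleftarrow}{\lim}(A_{\mathbb{C}})_{\widehat{p}}$, where density of $J$ forces each image $\pi_{\widehat{p}}(J)$ to exhaust the unital C*-factor $(A_{\mathbb{C}})_{\widehat{p}}$, and one must realize common preimages in $J\cap\sigma(J)$. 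Once $J$ is $\sigma$-invariant one has $J=(J\cap A)\dotplus i(J\cap A)=I_{\mathbb{C}}$, whence $I$ is dense by the inequality above and proper because $I=A$ would force $J=A_{\mathbb{C}}$. The right-ideal case is verbatim the same, since $\sigma$ and complexification send right ideals to right ideals.
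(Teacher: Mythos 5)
Your overall strategy (complexify, quote Fritzsche, descend to $A$) matches the first half of the paper's argument, and your observation that $x$ itself stays unbounded because $\widehat{p}(x)=p(x)$ is even a little cleaner than the paper's choice of $(\mathbf{1}+i)x$. But the proof is not complete: the step you yourself flag as ``the main obstacle'' --- securing $\sigma$-invariance of the Fritzsche ideal $J$, or else the density of $J\cap\sigma(J)$ --- is exactly where the real content of the descent lies, and neither of your two escape routes is actually carried out. The appeal to ``canonicity'' of Fritzsche's construction is not an argument: $\sigma$ is only conjugate-linear, and you have not exhibited the construction as a formula in $x$ that visibly commutes with conjugate-linear $*$-automorphisms, so $\sigma(J)=J$ remains an assumption. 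The fallback $J\cap\sigma(J)$ is indeed $\sigma$-invariant and proper, but density of an intersection of two dense \emph{left} ideals in a non-normed topological algebra is a genuine problem (the usual device $J_{1}J_{2}\subseteq J_{1}\cap J_{2}$ requires one of them to be two-sided), and your sketch via the Arens--Michael decomposition is a restatement of the difficulty rather than a resolution of it. Note also that your density argument for $I=J\cap A$ already uses the invariance silently: the real part $u$ of $z\in J$ lies in $I$ only when $\sigma(z)\in J$.

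The paper sidesteps this issue entirely by descending a different object. Having obtained the dense left (resp.\ right) ideal $I_{\mathbb{C}}$ in $(A_{\mathbb{C}},\widehat{\tau})$, it applies Inoue's theorem to extract from $I_{\mathbb{C}}$ a bounded increasing left approximate identity $(u_{\lambda})$ with $\widehat{p}(u_{\lambda})\leq 1$, takes real parts $a_{\lambda}$ --- which form a bounded left approximate identity of $A$ because $p(xa_{\lambda}-x)\leq\widehat{p}(xu_{\lambda}-x)$ --- and then manufactures the real ideal directly from the set $\{a_{\lambda}\}\setminus\{\mathbf{1}\}$, so that no invariance question ever arises. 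To salvage your route you would need either to prove $\sigma$-invariance for the specific ideal Fritzsche constructs, or to prove that dense left ideals in a unital locally C*-algebra have dense intersection; absent one of these, the argument has a gap at its crux.
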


\begin{proof}
Let $(A,\tau )$ be a real locally C*-algebra. Then, from \cite%
{KatzFriedman06} it follows, that $(A_{%
\mathbb{C}
},\widehat{\tau })$ is a complex locally C*-algebra, where 
\begin{equation*}
A_{%
\mathbb{C}
}=A\dotplus iA,
\end{equation*}%
is the complexification of $A.$ If $x\in A$, be such that $x\notin A_{b},$
then there exists at least one unbounded element in $(A_{%
\mathbb{C}
},\widehat{\tau }),$ for example, one can take 
\begin{equation*}
(\mathbf{1}+i)x=x+ix.
\end{equation*}%
Therefore, from Fritzsche theorem for complex locally C*-algebras (see \cite%
{Fritzsche82}) it follows that there exists a dense left (resp. right) ideal 
$I_{%
\mathbb{C}
}$ in $(A_{%
\mathbb{C}
},\widehat{\tau }).$ Then, from the theorem of Inoue (\cite{Inoue71}) it
follows that there exists a left (resp. right) approximative identity $%
(u_{\lambda }),$ $\lambda \in \Lambda ,$ consisting of elements of $I_{%
\mathbb{C}
},$ such that: 

1). The net $(u_{\lambda }),$ $\lambda \in \Lambda ,$ is increasing, in the
sense that 
\begin{equation*}
u_{\lambda }\geq \mathbf{0},
\end{equation*}%
for every $\lambda \in \Lambda ,$ and 
\begin{equation*}
u_{\lambda }\leq u_{\nu },
\end{equation*}%
for any $\lambda \leq \nu $ in $\Lambda .$

2). 
\begin{equation*}
\widehat{p}(u_{\lambda })\leq 1,
\end{equation*}%
for all $\widehat{p}\in \widehat{\Gamma },$ $\lambda \in \Lambda .$where for
each $p\in \Gamma ,$ defined on $A,$ its extention $\widehat{p}\in \widehat{%
\Gamma }$ on all $A_{%
\mathbb{C}
},$ is defined as: 
\begin{equation*}
\widehat{p}(x+iy)=\sqrt{p(x)^{2}+p(y)^{2}},
\end{equation*}%
for every $x,y\in A.$

Let now 
\begin{equation*}
u_{\lambda }=a_{\lambda }+ib_{\lambda },
\end{equation*}%
$\lambda \in \Lambda ,$ where $a_{\lambda },b_{\lambda }\in A.$ Let us now
show that $(a_{\lambda }),$ $\lambda \in \Lambda ,$ is a left (resp. right)
approximate identity in $A.$ one can easily see that $(a_{\lambda }),$ $%
\lambda \in \Lambda ,$ is an increasing net in $A_{+},$ and, in fact, on one
hand, we have 
\begin{equation*}
p(a_{\lambda })\leq \sqrt{p(a_{\lambda })^{2}+p(b_{\lambda })^{2}}=\widehat{p%
}(a_{\lambda }+ib_{\lambda })=\widehat{p}(u_{\lambda })\leq 1,
\end{equation*}%
On the other hand, we have 
\begin{equation*}
p(xa_{\lambda }-x)\leq \widehat{p}(xu_{\lambda }-x)\longrightarrow 0,
\end{equation*}%
(resp. 
\begin{equation*}
p(a_{\lambda }x-x)\leq \widehat{p}(u_{\lambda }x-x)\longrightarrow 0\text{)},
\end{equation*}%
which proves that $(a_{\lambda }),$ $\lambda \in \Lambda $, is a left (resp.
right) approximate identity in $A$.

Let now 
\begin{equation*}
I=\{x:x\in A,\text{ }x=yz,\text{ where }y\in \{(a_{\lambda })\setminus 
\mathbf{1}\},\text{ }\lambda \in \Lambda ,\text{ and }z\in A\}
\end{equation*}%
(resp. 
\begin{equation*}
I=\{x:x\in A,\text{ }x=zy,\text{ where }y\in \{(a_{\lambda })\setminus 
\mathbf{1}\},\text{ }\lambda \in \Lambda ,\text{ and }z\in A\}\text{)}.
\end{equation*}

Due to associativity of multiplication in $A,$ I is obviously a left (resp.
right) ideal. On the other hand, one can see that $I$ is dense in $A$ due to
the fact that $(a_{\lambda }),$ $\lambda \in \Lambda $, is a left (resp.
right) approximate identity in $A.$ In addition, one can see that $I$ is
proper, because for the unbounded element $x\in A,$ 
\begin{equation*}
p(yx-x)\longrightarrow 0,
\end{equation*}%
when 
\begin{equation*}
y\in \{(a_{\lambda })\setminus \mathbf{1}\},\text{ }\lambda \in \Lambda ,
\end{equation*}%
(resp. 
\begin{equation*}
p(xy-x)\longrightarrow 0,
\end{equation*}%
when 
\begin{equation*}
y\in \{(a_{\lambda })\setminus \mathbf{1}\},\text{ }\lambda \in \Lambda 
\text{)},
\end{equation*}%
but 
\begin{equation*}
p(yx-x)\neq 0,
\end{equation*}%
(resp. 
\begin{equation*}
p(xy-x)\neq 0\text{)},
\end{equation*}%
due to the fact that 
\begin{equation*}
\mathbf{1}\notin \{(a_{\lambda })\setminus \mathbf{1}\}.
\end{equation*}
\end{proof}

Let us now turn again our attention to Jordan algebras. The next result is a
Jordan-algebraic analogue of Fritzsche's theorem from \cite{Fritzsche82} for
locally JB-algebras.

\begin{theorem}
Let $(J,\tau )$ be a real unital locally JB-algebra, and $x\in J$, be such
that $x\notin J_{b},$ where 
\begin{equation*}
J_{b}=\{x\in J:\left\Vert x\right\Vert _{\infty }=\underset{p\in \Gamma (E)}{%
\sup }p(x)<\infty \}.
\end{equation*}%
Then $(A,\tau )$ has a proper dense quadratic ideal $I$.
\end{theorem}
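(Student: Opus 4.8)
The plan is to keep the overall architecture of the preceding real locally C*-algebraic version of Fritzsche's theorem, but to realize every step intrinsically in $J$, since an exceptional JB-algebra has no faithful representation on a C*-algebra and so there is no complexification into a complex locally C*-algebra to fall back on. First I would replace $x$ by the positive unbounded element $a=x^{2}\in J_{+}$; because $p(a)=p(x^{2})=p(x)^{2}$ for every $p\in\Gamma$, the hypothesis $x\notin J_{b}$ gives $\sup_{p}p(a)=\infty$, so $a$ is unbounded and its spectrum $sp_{J}(a)\subseteq[0,\infty)$ is unbounded. Working in the unital algebra $(J,\tau)$ and invoking the Spectral theorem for locally JB-algebras on the associative subalgebra $C(a)$ generated by $a$ and $\mathbf{1}$, I obtain a continuous functional calculus $g\mapsto g(a)$ for continuous functions on $[0,\infty)$.

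Unlike in the proof of the Jordan analogue of Inoue's theorem, I would \emph{not} use the resolvent cutoffs $b_{\lambda}\circ(b_{\lambda}+\tfrac{\mathbf{1}}{n})^{-1}$, which tend to $\mathbf{1}$ and generate all of $J$; instead I would use genuine spectral cutoffs. Fix continuous functions $g_{n}$ on $[0,\infty)$ with $0\leq g_{n}\leq 1$, $g_{n}\equiv 1$ on $[0,n]$, $g_{n}\equiv 0$ on $[n+1,\infty)$, and $g_{n}\leq g_{n+1}$, and set $e_{n}=g_{n}(a)$. Then $\mathbf{0}\leq e_{n}\leq e_{n+1}\leq\mathbf{1}$, $p(e_{n})\leq 1$ for all $p$, and, crucially, $e_{n}\to\mathbf{1}$ in $\tau$: for each fixed $p$ one has $p(a)=\|a_{p}\|<\infty$, so once $n\geq p(a)$ the function $g_{n}$ equals $1$ on $sp_{J_{p}}(a_{p})$ and hence $p(\mathbf{1}-e_{n})=0$. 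Consequently $(e_{n})$ is a bounded, increasing quadratic approximate identity, i.e. $U_{e_{n}}y\to y$ for every $y\in J$. I would then take $I$ to be the (algebraic, not closed) quadratic ideal generated by $\{e_{n}\}$; the MacDonald identity $U_{U_{x}y}z=U_{x}U_{y}U_{x}z$ confirms that $I$ is spanned by the elements $U_{e_{n}}z$ together with their polarizations, and density is immediate since $U_{e_{n}}y\in I$ and $U_{e_{n}}y\to y$ for each $y\in J$.

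The hard part will be properness, $I\neq J$, and this is the only place where the hypothesis $x\notin J_{b}$ is genuinely used. Since $U_{\mathbf{1}}=\mathrm{id}$, a quadratic ideal containing $\mathbf{1}$ equals $J$, so it suffices to prove $\mathbf{1}\notin I$. Suppose instead $\mathbf{1}=\sum_{k}U_{e_{n_{k}}}z_{k}$ and put $N=\max_{k}n_{k}$. Using $\sup_{p}p(a)=\infty$, choose $p$ with $p(a)>N+1$; then in the JBW-bidual $J_{p}^{\prime\prime}$ the positive element $a_{p}$ has a nonzero spectral projection $q$ for the interval $(N+1,\|a_{p}\|]$. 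Because $e_{n_{k}}=g_{n_{k}}(a)$ with $g_{n_{k}}\equiv 0$ on $[N+1,\infty)$, each $(e_{n_{k}})_{p}$ lies in the Peirce-$0$ space $\{w:q\circ w=\mathbf{0}\}$ of $q$; hence, via $U_{(e_{n_{k}})_{p}}=U_{\mathbf{1}-q}\,U_{(e_{n_{k}})_{p}}\,U_{\mathbf{1}-q}$ (again a consequence of the MacDonald identity), each $U_{(e_{n_{k}})_{p}}$ maps into that Peirce-$0$ space, which is orthogonal to $q$. Applying $q\circ(\,\cdot\,)$ to the relation $\mathbf{1}_{p}=\sum_{k}U_{(e_{n_{k}})_{p}}(z_{k})_{p}$ then gives $q=q\circ\mathbf{1}_{p}=\mathbf{0}$, a contradiction. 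The genuinely delicate point, and the main obstacle, is exactly this last step: I must pass to the JBW-bidual to have the spectral projection $q$ at hand and then exploit the Peirce decomposition of $J_{p}^{\prime\prime}$ relative to $q$ and the stability of the Peirce-$0$ space under $U_{e_{n_{k}}}$. This is the Jordan-structural substitute for the proper dense ideal that the associative Fritzsche theorem supplied for free in the C*-case.
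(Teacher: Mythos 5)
Your proof is correct, but it takes a genuinely different route from the paper's. The paper regards $J$ as a dense quadratic ideal of itself, invokes its Theorem~4 on quadratic approximate identities to produce a qai $(a_{\lambda})$ indexed by finite subsets of $J$ (built from resolvent cutoffs $b_{\lambda}\circ(b_{\lambda}+\frac{\mathbf{1}}{n})^{-1}$), sets $I=\{U_{y}z:\ y\in\{(a_{\lambda})\}\setminus\{\mathbf{1}\},\ z\in J\}$, and gets the ideal property from the MacDonald identity and density from the qai property --- exactly the skeleton you kept. But the paper's properness step consists only of the remark that $p(U_{y}x-x)\longrightarrow 0$ while $p(U_{y}x-x)\neq 0$ because $\mathbf{1}\notin\{(a_{\lambda})\}\setminus\{\mathbf{1}\}$, and the unboundedness of $x$ never visibly enters; read literally, that argument would apply just as well to a unital JB-algebra in which every element is bounded, where no proper dense quadratic ideal can exist (a dense subspace meets the open set of invertible elements, and $U_{u}$ is surjective for invertible $u$). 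Your substitution of spectral cutoffs $e_{n}=g_{n}(x^{2})$ of the single unbounded positive element is what makes properness provable: choosing $p$ with $p(x^{2})>N+1$ forces $(e_{N})_{p}$ to be non-invertible in $J_{p}$, and your Peirce/spectral-projection argument in $J_{p}^{\prime\prime}$ (or, more economically, the elementary fact that $\mathbf{1}=U_{b}z$ in a unital Jordan algebra forces $b$ invertible, while $0\in sp_{J_{p}}((e_{N})_{p})$, which avoids the bidual entirely) excludes $\mathbf{1}\in I$. So your version isolates exactly where the hypothesis $x\notin J_{b}$ is used and supplies the one step the paper leaves unsupported; the only extra cost is the bookkeeping needed to identify the quadratic ideal generated by $\{e_{n}\}$, which is harmless because $g_{m}g_{n}=g_{m}$ for $n\geq m+1$ gives the nesting $U_{e_{m}}(J)\subseteq U_{e_{n}}(J)$ and hence $I=\bigcup_{n}U_{e_{n}}(J)$, disposing of the polarization terms you mention.
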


\begin{proof}
Let $(J,\tau )$ be a locally JB-algebra. Let us conside $J$ as a dense
quadratic ideal of itself. From Theorem 4 above it follows that $(J,\tau )$
has an qai $(a_{\lambda }),$ $\lambda \in \Lambda ,$ consisting of elements
of $J,$ such that: 

1). The net $(a_{\lambda }),$ $\lambda \in \Lambda ,$ is increasing, in the
sense that 
\begin{equation*}
a_{\lambda }\geq \mathbf{0},
\end{equation*}%
for every $\lambda \in \Lambda ,$ and 
\begin{equation*}
a_{\lambda }\leq a_{\nu },
\end{equation*}%
for any $\lambda \leq \nu $ in $\Lambda .$

2). 
\begin{equation*}
p(a_{\lambda })\leq 1,
\end{equation*}%
for all $p\in \Gamma ,$ $\lambda \in \Lambda .$

Let now 
\begin{equation*}
I=\{x:x\in J,\text{ }x=U_{y}z,\text{ where }y\in \{(a_{\lambda })\setminus 
\mathbf{1}\},\text{ }\lambda \in \Lambda ,\text{ and }z\in J\}.
\end{equation*}

Due to MacDonald Identity one can see that $I$ is a quadratic ideal of $J$.
One can easily see that $I$ is dense in $(J,\tau )$ because $(a_{\lambda }),$
$\lambda \in \Lambda ,$ is an qai of $(J,\tau ).$ On the other hand, one can
see that $I$ is a proper quadratic ideal of $(J,\tau ),$ because because for
the unbounded element $x\in A,$ 
\begin{equation*}
p(U_{y}x-x)\longrightarrow 0,
\end{equation*}%
when 
\begin{equation*}
y\in \{(a_{\lambda })\setminus \mathbf{1}\},\text{ }\lambda \in \Lambda ,
\end{equation*}%
but 
\begin{equation*}
p(U_{y}x-x)\neq 0,
\end{equation*}%
due to the fact that 
\begin{equation*}
\mathbf{1}\notin \{(a_{\lambda })\setminus \mathbf{1}\}.
\end{equation*}
\end{proof}

\end{document}